\newcommand{\LO}{\ensuremath{L_{\gamma,\alpha}}}
\newcommand{\LOl}{\ensuremath{L_{\gamma,2}}}
\newcommand{\Bm}{\ensuremath{\beta_-(\gamma,\alpha)}}
\newcommand{\Bp}{\ensuremath{\beta_+(\gamma,\alpha)}}
\newcommand{\R}{\ensuremath{\mathbb{R}}}
\newcommand{\h}{\ensuremath{H^{\frac{\alpha}{2}}_0(\mathbb{R}^n)}}
\newcommand{\homega}{\ensuremath{H_0^{\frac{\alpha}{2}}(\Omega)}}
\newcommand{\homegal}{\ensuremath{H_0^1(\Omega)}}
\newcommand{\N}{\ensuremath{\mathbb{N}}}
\def \rr {\mathbb{R}}
\def \rn {\mathbb{R}^n}
\def \ue {u_\epsilon}
\def \Te {T_{\epsilon,\alpha}}
\def \eps {\epsilon}
\def \crits {2_{\alpha}^*(s)}
\def \critsl {2_{2}^*(s)}
\def \bp {\beta_+(\alpha)}
\def \bm {\beta_-(\alpha)}
\def \bpl {\beta_+(2)}
\def \bml {\beta_-(2)}
\def \mur {\mu_{\gamma,s, \alpha}(\R^n)} 
\def \murl {\mu_{\gamma,s, 2}(\R^n)} 
\def \fo {(-\Delta)^{\frac{\alpha}{2}}}
\newcommand{\ir}{\int_{\R ^n}}
\newcommand{\io}{\int_{\Omega}}
\newcommand{\bs}{\begin{split}}
\newcommand{\es}{\end{split}}
\title{Existence Result for Non-linearly Perturbed Hardy-Schr\"odinger Problems: Local and Non-local cases}
\author{Shaya Shakerian  \\
{\it\small Department of Mathematics}\\
{\it\small  University of British Columbia}\\
{\it\small Vancouver BC Canada V6T 1Z2}\\
{\it\small shaya@math.ubc.ca}\vspace{1mm}
\\\\
}
\newtheorem{definition}{Definition}[section]
\newtheorem{theorem}[definition]{Theorem}
\newtheorem{lemma}[definition]{Lemma}
\newtheorem{proposition}[definition]{Proposition}
\newtheorem{remark}[definition]{Remark}
\theoremstyle{definition}
\begin{document}

 {\date{}}
\maketitle

\begin{abstract}
Let $\Omega \subset \R^n$ be a smooth bounded domain having zero in its interior $0 \in \Omega.$  We fix $0 < \alpha  \le 2$  and $0 \le s <\alpha.$ We investigate a sufficient condition for the existence of a positive solution for  the following perturbed problem associated with the Hardy-Schr\"odinger operator   $ \LO: = ({-}{ \Delta})^{\frac{\alpha}{2}}-  \frac{\gamma}{|x|^{\alpha}}$ on $\Omega:$ 
\begin{equation*}\label{Main problem: M-P solutions:Abstract}
\left\{\begin{array}{rl}
\displaystyle ({-}{ \Delta})^{\frac{\alpha}{2}}u- \gamma \frac{u}{|x|^{\alpha}} - \lambda u= {\frac{u^{\crits-1}}{|x|^s}}+ h(x) u^{q-1} & \text{in }  {\Omega}\vspace{0.1cm}\\

 u=0 \,\,\,\,\,\,\,\,\,\,\,\,\,\,\,\,\,\,\,\,\,\,\,\,\,\,\,\,\,\,\,\,\,\,\,\,\,\,\,\,\,\,\,\,\,\,\,\,\,\,\,\,  & \text{in } \R^n \setminus \Omega,
\end{array}\right.
\end{equation*}
where  ${2_{\alpha}^*(s)}:=\frac{2(n-s)}{n-{\alpha}},$ $\lambda \in \R $,  $h \in C^0(\overline{\Omega}),$ $h \ge 0,$ $q \in (2, 2^*_\alpha)$ with $2^*_\alpha:=2^*_\alpha(0),$ and  $\gamma < \gamma_H(\alpha),$ the latter being the best constant in the Hardy inequality on $\R^n.$   We prove that there exists a threshold $  \gamma_{crit}(\alpha)$ in $( - \infty, \gamma_H(\alpha)) $ such that the existence of solutions of the above problem is guaranteed by the non-linear perturbation $(i.e., h(x) u^{q-1})$ whenever $ \gamma \le \gamma_{crit}(\alpha),$ while for $\gamma_{crit}(\alpha)<\gamma <\gamma_H(\alpha)$, it is determined by a subtle combination of  the geometry of the domain and the size of the nonlinearity of the perturbations.

%depends on both the perturbation and the geometry of the domain. we show that the existence of ground state solutions for large intensities, 

\end{abstract}

\section{Introduction}

Let $h(x)$ be a  non-negative function  in $ C^0(\overline{\Omega}).$ Given  $0< \alpha \le 2$ and $ 0 \le s < \alpha,$ we consider the following perturbed problem associated with the operator $ \LO: = ({-}{ \Delta})^{\frac{\alpha}{2}}-  \frac{\gamma}{|x|^{\alpha}}$ on bounded domains $\Omega \subset \R^n (n>\alpha)$ with $0 \in \Omega$:
\begin{equation}\label{Main problem: M-P solutions}
\left\{\begin{array}{rl}
\displaystyle ({-}{ \Delta})^{\frac{\alpha}{2}}u- \gamma \frac{u}{|x|^{\alpha}} - \lambda u= {\frac{u^{\crits-1}}{|x|^s}}+ h u^{q-1} & \text{in }  {\Omega}\vspace{0.1cm}\\
u\geq 0  \,\,\,\,\,\,\,\,\,\,\,\,\,\,\,\,\,\,\,\,\,\,\,\,\,\,\,\,\,\,\,\,\,\,\,\,\,\,\,\,\,\,  & \text{in } \Omega ,\\
 u=0 \,\,\,\,\,\,\,\,\,\,\,\,\,\,\,\,\,\,\,\,\,\,\,\,\,\,\,\,\,\,\,\,\,\,\,\,\,\,\,\,\,\,  & \text{in } \R^n \setminus \Omega,
\end{array}\right.
\end{equation}
where ${2_{\alpha}^*(s)}:=\frac{2(n-s)}{n-{\alpha}},$  $q \in (2, 2^*_\alpha)$ with $2_{\alpha}^*:=2_{\alpha}^*(0),$ $\lambda \in \R ,$ and $\gamma < \gamma_H(\alpha):=2^\alpha \frac{\Gamma^2(\frac{n+\alpha}{4})}{\Gamma^2(\frac{n-\alpha}{4})},$ the latter being the best constant in the Hardy inequality on $\R^n$ defined in (\ref{fractional Trace Hardy inequality}). The structure of the operator $\fo$ has broad influence on problem (\ref{Main problem: M-P solutions}). More precisely, if $\alpha =2,$ this operator is defined as 
$$(-\Delta)^{\frac{\alpha}{2}}u:= -\Delta u = \sum_{i=0}^{n} \frac{\partial^2 u }{\partial x_i^2} ,
  \quad  \text{ for } u \in C^2.$$ It is well-known that  the classical Laplacian  $-\Delta$ is a local operator. Therefore, problem (\ref{Main problem: M-P solutions}) involves a local equation when $\alpha =2.$ While, for $0< \alpha<2,$ the operator $\fo$ has more complicated structure.  Indeed, for these values of $\alpha,$ the operator  is  non-local  and it is defined as 
$$
 (-\Delta)^{\frac{\alpha}{2}}u:= \mathcal{F}^{-1}(|2 \pi\xi|^{\alpha}(\mathcal{F}u)) \quad \forall\xi\in\R^n,
  \quad \text{ for } u \in \mathcal{S},$$
where  $\mathcal{S}$ is the Schwartz class (space of rapidly decaying $C^\infty$ functions
in $\R^n$) and $ \mathcal{F}u$ denotes the Fourier transform of $u.$ Hence, the operator $\fo$ and problem (\ref{Main problem: M-P solutions})  are non-local when $0 < \alpha <2.$\\ 

% It is well-known that $\fo$ is a non-local operator when $0<\alpha<2,$ (the fractional Laplacian operator) and is a local  operator when $\alpha=2$ (the classical Laplacian). 

 %[XXX]Note that the condition $0 <\alpha \le 2$ covers both the local and non-local cases. Indeed, problem (\ref{Main problem: M-P solutions}) is a non-local equation when $0 < \alpha <2$ and  is local when $\alpha=2.$ [XXX]

When $(h \equiv 0)$, problem (\ref{Main problem: M-P solutions}) has been studied in the both  local and non-local cases. See \cite{Ghoussoub-Robert-2015} and \cite{GFSZ}, and the references therein. In \cite{Jaber}, Jaber considered the local problem ($i.e.,  \alpha =2$) in the Riemannian context but in the absence of the Hardy term $(i.e., \gamma =0).$ 
%Jaber  addressed the problem of the existence of perturbed equation ( \ref{problem: Perturbed - Mountain pass}). 
A similar problem, with the second order operator replaced by the fourth order Paneitz operator, was studied by Esposito-Robert \cite{Esposito-Robert} (see also Djadli- Hebey-Ledoux \cite{Djadli-Hebey- Ledoux}).  The author in \cite{Shakerian} addressed questions regarding  the existence and multiplicity of solutions of  problem (\ref{Main problem: M-P solutions}) in the case when $\lambda =0$ and  $1<q<2$ (i.e., the Concave-Convex non-linearity).  In this paper, we consider the remaining cases. 

By using ideas from \cite{GFSZ} and \cite{Jaber}, we investigate the role of the linear perturbation $(i.e., \ \lambda u),$  the non-linear  perturbation $(i.e., \ h u^{q-1})$, and the geometry of the domain 
%(\textit{the mass} $m^\alpha_{\gamma,\lambda}$)
% introduced in Theorem \ref{Theorem:TheMass})  
on the existence of a positive solution of (\ref{Main problem: M-P solutions}). As in Jaber \cite{Jaber}, 
%we realize that the existence of a  variational solution for (\ref{Main problem: M-P solutions}) cannot be proceeded by the method as in the non perturbative case $h  \equiv 0$ studied in \cite{GFSZ}. Instead, 
our main tool here to investigate the existence of solutions is  the  following Mountain Pass Lemma of Ambrosetti- Rabinowitz \cite{Ambrosetti-Rabinowitz}:
% (see Lemma \ref{Theorem MPT- Ambrosetti-Rabinowitz version}). 
%Solutions obtained by this method are called  Mountain Pass solutions. 

\begin{lemma}[Ambrosetti and Rabinowitz \cite{Ambrosetti-Rabinowitz}] \label{Theorem MPT- Ambrosetti-Rabinowitz version}
Let $(V,\|\,\|$) be a Banach space and $\Psi: {V \to \R}$ a $C^1-$functional satisfying the following conditions:\\
(a) $ \Psi(0)=0,$ \\
(b) There exist $\rho, R>0$ such that $\Psi(u) \ge \rho$ for all $u \in V$, with $\|u\|=R,$\\
(c) There exists $v_0 \in V $ such that $\limsup\limits_{t \to \infty} \Psi(tv_0) <0.$\\
Let $t_0>0$ be such that $\|t_0v_0\|> R$ and $\Psi(t_0v_0)<0,$ and define
$$c_{v_0}(\Psi):=\inf\limits_{\sigma \in \Gamma} \sup\limits_{t \in[0,1]} \Psi(\sigma(t)),$$ where  $$\Gamma:= \{\sigma\in C([0,1],V): \sigma(0)=0 \text{ and } \sigma(1)=t_0v_0 \}.$$
Then, $c_{v_0}(\Psi) \ge \rho>0$, and there exists a Palais-Smale sequence at level $c_{v_0}(\Psi)$, that is there exists a sequence  $(w_k)_{k \in \mathbb{N}} \in V$ such that 
 $$
 \lim\limits_{k \to \infty} \Psi(w_k)=c_{v_0}(\Psi) \ \text{ and } \lim\limits_{k \to \infty}  \Psi'(w_k)=0 \, \, \quad \text{strongly in} \, V' .$$
\end{lemma}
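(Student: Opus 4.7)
The plan is to proceed in two stages: first establish the lower bound $c_{v_0}(\Psi)\ge\rho>0$ by a topological argument, then produce the Palais--Smale sequence at level $c_{v_0}(\Psi)$ via the classical quantitative deformation lemma.

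For the lower bound, I fix an arbitrary $\sigma\in\Gamma$ and note that $t\mapsto\|\sigma(t)\|$ is continuous with $\|\sigma(0)\|=0<R<\|t_0v_0\|=\|\sigma(1)\|$; by the intermediate value theorem there exists $t_\sigma\in(0,1)$ with $\|\sigma(t_\sigma)\|=R$, so hypothesis (b) gives $\Psi(\sigma(t_\sigma))\ge\rho$. Taking the supremum over $t$ and then the infimum over $\sigma\in\Gamma$ yields $c_{v_0}(\Psi)\ge\rho>0$.

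For the Palais--Smale sequence, I argue by contradiction. Suppose no such sequence exists; writing $c:=c_{v_0}(\Psi)$, a standard compactness argument then produces $\bar\epsilon\in(0,c/2)$ and $\delta>0$ with $\|\Psi'(u)\|_{V'}\ge\delta$ for every $u\in\Psi^{-1}([c-2\bar\epsilon,c+2\bar\epsilon])$. The quantitative deformation lemma next furnishes a continuous map $\eta:[0,1]\times V\to V$ such that $\eta(1,\cdot)$ carries $\{\Psi\le c+\bar\epsilon\}$ into $\{\Psi\le c-\bar\epsilon\}$ and fixes every $u$ with $|\Psi(u)-c|\ge 2\bar\epsilon$. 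Choose $\sigma\in\Gamma$ with $\sup_t\Psi(\sigma(t))\le c+\bar\epsilon$ (possible by the definition of the infimum) and set $\tilde\sigma(t):=\eta(1,\sigma(t))$. Since $\Psi(0)=0$ and $\Psi(t_0v_0)<0$, while $c-2\bar\epsilon>0$, both endpoints lie in the fixed region, so $\tilde\sigma\in\Gamma$; yet $\sup_t\Psi(\tilde\sigma(t))\le c-\bar\epsilon$, contradicting the definition of $c$.

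The main technical point is the construction of $\eta$. Because $\Psi$ is only $C^1$, one cannot integrate $-\Psi'$ directly; instead, one builds a pseudo-gradient vector field $X$ on $\{u\in V:\Psi'(u)\neq 0\}$ satisfying $\|X(u)\|\le 2\|\Psi'(u)\|_{V'}$ and $\langle\Psi'(u),X(u)\rangle\ge\|\Psi'(u)\|_{V'}^2$, whose existence is guaranteed by a locally finite partition-of-unity argument on the metric space $V$. Multiplying $X$ by a Urysohn-type cutoff supported in $\Psi^{-1}([c-2\bar\epsilon,c+2\bar\epsilon])$ produces a globally defined, locally Lipschitz vector field whose time-one flow provides the required $\eta$; the lower bound $\|\Psi'\|_{V'}\ge\delta$ inside the tube converts the monotonicity of $\Psi$ along flow lines into the quantitative descent $\Psi(\eta(1,u))\le\Psi(u)-\bar\epsilon$ for $u$ initially in $\{\Psi\le c+\bar\epsilon\}$. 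Since this machinery is classical and carried out in detail in \cite{Ambrosetti-Rabinowitz}, I would cite that reference for the full construction rather than reproduce it.
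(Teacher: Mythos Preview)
Your proof sketch is the standard one and is correct. Note, however, that the paper does not supply its own proof of this lemma: it is stated as Lemma~\ref{Theorem MPT- Ambrosetti-Rabinowitz version} and attributed to Ambrosetti--Rabinowitz \cite{Ambrosetti-Rabinowitz}, so there is no proof in the paper to compare against.
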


% in order to get geometrical sufficient conditions for the existence of solutions to (\ref{problem: Perturbed - Mountain pass}). This condition will be presented later by applying the Mountain Pass lemma
%Indeed, we first apply the Mountain-Pass lemma to introduce a general condition for the existence of solution for (\ref{problem: Perturbed - Mountain pass}), and then estimating the test-functions shows that the sufficient condition is obtained. %Indeed, the test-functions constructed in Section \ref{Section: test functions in critical and non-critical cases} are the suitable ones which will be needed in this chapter.
%In the spirit of 
Inspired by the work of Jannelli \cite{Jannelli}, 
it was shown in \cite{GFSZ} that 
 the behaviour of problem (\ref{Main problem: M-P solutions})  is deeply influenced by the value of the parameter $\gamma$. 
%Roughly speaking, when $\gamma$ is sufficiently near to $\gamma_H(\alpha),$ then problem (\ref{Main problem: M-P solutions}) becomes critical, in the sense of Pucci-Serrin \cite{Pucci-Serrin}.   
%Following the work of Jannelli \cite{Jannelli}, it has been proved in \cite{GFSZ} that
More precisely,  there exists a threshold  $ \gamma_{crit}(\alpha) \in (-\infty, \gamma_H(\alpha))$ such that the
operator $\LO$
% problem (\ref{Main problem: M-P solutions}) 
 becomes critical in the following sense:
%the operator $\LO$ becomes critical when $\gamma \in (\gamma_{crit}(\alpha), \gamma_H(\alpha)).$

\begin{definition}
We say that the Hardy-Schr\"odinger operator  $\LO$ is critical, \\

$\bullet$ for $0<\alpha<2,$ if $ \gamma_{crit}(\alpha)< \gamma < \gamma_H(\alpha)$  when $n \ge 2 \alpha$, or $0 \le  \gamma < \gamma_H(\alpha) $
when $\alpha < n < 2\alpha.$

$\bullet$ for $\alpha = 2,$ if $ \gamma_{crit}(2)< \gamma < \gamma_H(2)$  when $n \ge 4$, or $ \gamma < \gamma_H(2) $
when $n =3.$\\

Otherwise, the operator $\LO$ is called non-critical.
\end{definition}

 Our analysis shows that the existence of a solution for  problem (\ref{Main problem: M-P solutions}) depends only on the non-linear perturbation when the operator $\LO$ is non-critical, while the critical case is more complicated and  depends on other conditions involving both the perturbation and the global geometry of the domain. More precisely, in the non-critical case, the competition is between the linear and non-linear perturbations, and  since $q > 2,$ the non-linear term dominates. In the critical case, this competition is more challenging as it is between the  geometry of the domain (i.e., \textit{the mass}) and the non-linear perturbation. In this situation,  
%the condition $q>2$ is not sufficient and 
there exists a threshold $q_{crit}(\alpha) \in (2, 2^*_\alpha),$ where the dominant factor switches from the non-linear perturbation to the mass. The transition at  $2^*_\alpha$ is most interesting.   
% One can summarize the competition in the critical case as follows.
%\begin{center}
%Table of the competition between \textit{the mass} and \textit{non-linear perturbation} 
%\begin{tabular}{ | m{2.7cm} | m{1.6 cm}| m{3.4cm} | m{1.6cm}|  m{0.7cm}|}
%\hline
%\quad Competitors & $q > q_{crit}$ & \qquad \ $q = q_{crit}$ & $q < q_{crit}$.\\
%\hline
% Non-linear term  & Dominate & \quad Equally Dominate & \ \quad $\times$ \\ 
%\hline
% \quad The mass & \ \quad $\times$ & \quad Equally Dominate  & Dominate \\ 
%\hline
%\end{tabular}
%\end{center}
We shall establish the following result.

\begin{theorem}\label{Thm M-P: Main result }
Let $\Omega$ be a smooth bounded domain in $\R^n (n$ $ > \alpha)$ such that $0 \in \Omega,$ and let $\crits:= \frac{2(n-s)}{n-\alpha}, $  $0 \le  s < \alpha,$ $0 < \alpha \le 2,$ $ -\infty < \lambda < \lambda_1(\LO),$ and $ \gamma < \gamma_H(\alpha).$ We also assume that $2 < q <2^*_\alpha,$ $h \in C^0(\overline{\Omega})$ and $h \ge0.$ Then, there exists a non-negative  solution $u \in \homega$  to (\ref{Main problem: M-P solutions})  under one of the following conditions:\\

\smallskip\noindent (1)  $\LO$ is not critical and   $h(0)>0.$\\

(2)  $\LO$ is critical and $
\left\{\begin{array}{rl}
\displaystyle h(0)>0\,\,\,\,\,\,\,\,\,\,\,\,\,\,\,\,\,\,\,\,\,\,\,\,\,\,\,\,\,\,\,\,\,\,\,\,\,\,\,\, & \text{if }  q > q_{crit}(\alpha)\ \vspace{0.1cm}\\
c_1 h(0) + c_2 m^{\lambda}_{\gamma,\alpha} (\Omega)>0 & \text{if } q = q_{crit}(\alpha)\\
m^{\lambda}_{\gamma,\alpha}(\Omega) >0 \,\,\,\,\,\,\,\,\,\,\,\,\,\,\,\,\,\,\,\,\,\,\,\,\,\,\,\,\,\,  & \text{if } q < q_{crit}(\alpha).
\end{array}\right.
$\\

Here $c_1,c_2 $ are two positive constants that can be computed explicitly (see Subsection \ref{Subsection:estimates for non-local case} ), while $q_{crit}(\alpha) = 2^*_\alpha - 2 \frac{\Bp-\Bm}{n-\alpha} \in (2 ,2^*_\alpha).$ Also,  $\Bm$ (resp., $\Bp$) is the unique solution in 
$\left(0,\frac{n-\alpha}{2}\right)$ (resp., in $\left(\frac{n-\alpha}{2},n-\alpha\right))$ of the equation 

$$\Psi_{n,\alpha}(t):= 2^\alpha \frac{\Gamma(\frac{n-t}{2})\Gamma(\frac{\alpha+t}{2})}{  \Gamma(\frac{n-t-\alpha}{2})\Gamma(\frac{t}{2} ) }= \gamma.$$

\end{theorem}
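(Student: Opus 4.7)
Proof plan: I plan to apply Lemma \ref{Theorem MPT- Ambrosetti-Rabinowitz version} to the $C^1$-functional
$$\Psi(u) := \frac{1}{2}\io\bigl(|(-\Delta)^{\alpha/4}u|^2 - \gamma \tfrac{u^2}{|x|^\alpha} - \lambda u^2\bigr)\,dx - \frac{1}{\crits}\io\frac{u_+^{\crits}}{|x|^s}\,dx - \frac{1}{q}\io h\,u_+^{q}\,dx$$
on $V=\homega$, where $u_+:=\max(u,0)$, so that any nontrivial critical point is automatically a nonnegative solution of (\ref{Main problem: M-P solutions}). Since $\gamma<\gamma_H(\alpha)$ and $\lambda<\lambda_1(\LO)$, the quadratic part is coercive and equivalent to the standard norm on $V$; together with the Hardy--Sobolev inequality and $q,\crits>2$ this gives the three geometric hypotheses (a)--(c) of the lemma, for instance with $v_0$ any fixed nonnegative nonzero element of $V$. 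The lemma then delivers a Palais--Smale sequence at a mountain-pass level $c:=c_{v_0}(\Psi)\ge\rho>0$.

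Next, I would prove the standard compactness threshold: any Palais--Smale sequence for $\Psi$ at a level $c$ with
$$c<c^\star := \frac{\alpha-s}{2(n-s)}\,\mur^{\frac{n-s}{\alpha-s}}$$
has a subsequence converging strongly in $V$ to a nontrivial nonnegative solution of (\ref{Main problem: M-P solutions}). This follows the usual route: Ambrosetti--Rabinowitz-type boundedness, extraction of a weak limit $u_0$ which is itself a solution, Brezis--Lieb decomposition of the residual $w_k:=u_k-u_0$, and the fact that any nontrivial bubble escaping to $0$ must carry energy at least $c^\star$. The theorem then reduces to exhibiting, under each listed hypothesis, a competitor $\ue\ge 0$ with $\sup_{t\ge 0}\Psi(t\ue)<c^\star$; one then takes $v_0=\ue$ in Lemma \ref{Theorem MPT- Ambrosetti-Rabinowitz version}.

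For $\ue$ I would use a truncation in $\Omega$ of an extremal of $\mur$ centered at $0$, whose asymptotics are governed by $|x|^{-\Bm}$ near the origin and $|x|^{-\Bp}$ at infinity. A careful expansion of $\sup_{t\ge 0}\Psi(t\ue)$ as $\epsilon\downarrow 0$ produces two competing negative corrections below $c^\star$: a perturbation term of order $h(0)\,\epsilon^{\kappa_1(q)}$ whose exponent decreases in $q$, and a geometric term. When $\LO$ is non-critical, the dimension restrictions force the geometric correction to be absorbed into the error, so $h(0)>0$ alone yields (1). When $\LO$ is critical, the Green's function of $\LO-\lambda$ on $\Omega$ at $0$ has a regular part whose leading coefficient is precisely the mass $m^{\lambda}_{\gamma,\alpha}(\Omega)$, producing an additional correction of order $m^{\lambda}_{\gamma,\alpha}(\Omega)\,\epsilon^{\Bp-\Bm}$. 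The critical exponent $q_{crit}(\alpha)=2^*_\alpha-2(\Bp-\Bm)/(n-\alpha)$ is the unique value of $q$ for which $\kappa_1(q)=\Bp-\Bm$, and the trichotomy of (2) corresponds precisely to whether the $h(0)$-term dominates, the mass-term dominates, or the two are balanced and must be combined as $c_1 h(0)+c_2 m^{\lambda}_{\gamma,\alpha}(\Omega)>0$.

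The main obstacle is this sharp expansion in the critical regime. It requires (i) identifying $m^{\lambda}_{\gamma,\alpha}(\Omega)$ as the regular part at $0$ of the Green's function of $\LO-\lambda$ on $\Omega$, (ii) expanding $\io(|(-\Delta)^{\alpha/4}\ue|^2-\gamma\ue^2/|x|^\alpha-\lambda\ue^2)\,dx$ with error $o(\epsilon^{\Bp-\Bm})$, which forces working with renormalized integrals against the Hardy weight $|x|^{-\alpha}$, and (iii) in the non-local case $\alpha<2$, controlling the tail of $\ue$ in $\R^n\setminus\Omega$ that is absent when $\alpha=2$. These delicate estimates form the technical core of the argument, carried out in Subsection \ref{Subsection:estimates for non-local case}.
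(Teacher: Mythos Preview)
Your overall strategy matches the paper's: mountain-pass geometry for the functional $\Phi$, Palais--Smale compactness below the threshold $c^\star=\frac{\alpha-s}{2(n-s)}\mur^{\frac{n-s}{\alpha-s}}$, and then a test-function expansion to push the min-max level below $c^\star$. The decomposition into Theorem~\ref{Thm M-P: main condition of existence} plus Proposition~\ref{Prop M-P: estimate at the test functions} is exactly what you outline.

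There is, however, one substantive gap in the critical case. You propose using a single family $\ue$, namely a cut-off extremal $\eta u_\eps$, in all regimes, and you expect the mass $m^{\lambda}_{\gamma,\alpha}(\Omega)$ to emerge from the expansion of $\sup_{t\ge0}\Psi(t\ue)$ via the regular part of the Green's function. This does not work as stated: when $\gamma>\gamma_{crit}(\alpha)$ one has $\Bp-\Bm<\alpha$, so the truncation error from $\eta$ already enters at order $\eps^{\Bp-\Bm}$, but its coefficient depends on the arbitrary choice of cut-off, not on the domain geometry. The paper (following \cite{GFSZ}) instead \emph{modifies} the test function in the critical regime to
\[
T_{\eps,\alpha}(x)=\eta(x)u_\eps(x)+\eps^{\frac{\Bp-\Bm}{2}}g(x),
\]
where $g\in\homega$ is the lower-order part of the singular solution $H(x)=\eta(x)|x|^{-\Bp}+g(x)$ of $(\LO-\lambda)H=0$ in $\Omega$, and $g(x)=m^{\lambda}_{\gamma,\alpha}(\Omega)|x|^{-\Bm}+o(|x|^{-\Bm})$ near $0$. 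It is precisely this added tail that (a) cancels the cut-off dependent error at order $\eps^{\Bp-\Bm}$ and (b) produces the clean correction $-c\,m^{\lambda}_{\gamma,\alpha}(\Omega)\,\eps^{\Bp-\Bm}$ in $I_\eps$. Without building $g$ into the test function, you cannot isolate the mass as the coefficient of the leading correction, and the trichotomy in part~(2) does not follow. Your items (i)--(iii) are the right ingredients, but they feed into the construction of $T_{\eps,\alpha}$ rather than into an expansion of the bare truncated extremal.
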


%\begin{remark}\rmWhen the operator $\LO$ is non-critical, our condition depends only on the non-linear perturbation $h$ $(i.e., h(0)>0),$ and not on the positivity of $\lambda$, which was the case in the non-perturbed case.
% Indeed, in the non-perturbed case $h \equiv 0,$ the condition for existence obtained in \cite{GFSZ} depends on the linear perturbation, and the condition $0 < \lambda < \lambda_1(\LO)$ is required.\end{remark}

%In order to summarize the results in Theorem \ref{Thm M-P: Main result -Introduction }, we set $\gamma_{crit}:= \gamma_{crit}(\alpha),$ $\gamma_H:= \gamma_H(\alpha)$ and $  m^\alpha_{\gamma,\lambda}:=m^\alpha_{\gamma,\lambda}(\Omega),$ and assume that $ 0 \le s <\alpha$ and $ 2 < q < 2^*_\alpha.$

 One can then complete the picture as follows.

\begin{center}
Non-linearly perturbed problem (\ref{Main problem: M-P solutions}): with $0 \in \Omega:$ $ 2 < q < 2^*_\alpha$ and $ \lambda < \lambda(\LO)$

\begin{tabular}{ | m{2.3cm} | m{1.9 cm}| m{1.7cm} | m{1.1cm} | m{3.5cm}| m{0.5cm}|}
\hline
  Operator $\LO$ & Singularity & \qquad  $q$ & \quad $\lambda$ & \quad Analytic. cond & Ext.\\
\hline
  Not Critical & \quad $s \ge 0  $ & \ \quad $> 2$ & $> - \infty $ &  \qquad \ $h(0) >0$ & Yes \\ 
\hline
\quad Critical  & \quad $s \ge 0  $ & $  > q_{crit}(\alpha)$& $> - \infty$ & \qquad  \ $h(0) >0$ & Yes \\
\hline
\quad Critical & \quad $s \ge 0  $  &   $  = q_{crit}(\alpha)$ &$> -\infty$ & $c_1 h(0) + c_2 m^{\lambda}_{\gamma,\alpha} (\Omega) >0$  & Yes  \\ 
\hline
\quad Critical   & \quad $s \ge 0  $  & $  < q_{crit}(\alpha)$ & \quad $>0$ &  \qquad $m^{\lambda}_{\gamma,\alpha} (\Omega)>0$ & Yes \\  
\hline
\end{tabular}
\end{center}

Theorem \ref{Thm M-P: Main result } suggests the following remarks:

\begin{remark}
The notation $m^{\lambda}_{\gamma,\alpha} (\Omega)$ stands for the mass associated to the operator $\LO - \lambda I $ which is defined in Theorem 1.2 in \cite{GFSZ} for $0 < \alpha <2,$ and in Proposition 3 in \cite{Ghoussoub-Robert-2015} for $\alpha=2.$
\end{remark}

\begin{remark}
When $\alpha =2,$ problem (\ref{Main problem: M-P solutions}) becomes local, and it can be written as follows

\begin{equation}\label{Main problem: M-P solutions-Local}
\left\{\begin{array}{rl}
\displaystyle {-}{ \Delta} u- \gamma \frac{u}{|x|^2} - \lambda u= {\frac{u^{\critsl-1}}{|x|^s}}+ h(x) u^{q-1} & \text{in }  {\Omega}\vspace{0.1cm}\\
u\geq 0  \,\,\,\,\,\,\,\,\,\,\,\,\,\,\,\,\,\,\,\,\,\,\,\,\,\,\,\,\,\,\,\,\,\,\,\,\,\,\,\,\,\,\,\,\,\,\,\,\,\,\,\,  & \text{in } \Omega ,\\
 u=0 \,\,\,\,\,\,\,\,\,\,\,\,\,\,\,\,\,\,\,\,\,\,\,\,\,\,\,\,\,\,\,\,\,\,\,\,\,\,\,\,\,\,\,\,\,\,\,\,\,\,\,\,  & \text{in } \partial \Omega,
\end{array}\right.
\end{equation}

where  $u$ belongs to the space $H_0^{1}(\Omega),$ which  is the completion of $C_c^\infty(\Omega)$ with respect to the norm $$ \|u\|_{H_0^{1}(\Omega)}^2 = \int_{\Omega} |\nabla u|^2dx.$$

\end{remark}

%The same results as in Theorem \ref{Thm M-P: Main result } hold in the local setting $( \text{ i.e., when } \alpha = 2).$ Since the parameters in local case are  all known explicitly, we 

\begin{remark}
Note that the best Hardy constant ($\gamma_H(\alpha)$) and  the critical threshold $\gamma_{crit}(\alpha)$ can be  computed explicitly when $\alpha=2;$ see \cite{Ghoussoub-Robert-2015}. Indeed, we have  $$\gamma_{crit}(2):= \frac{(n-2)^2}{4} -1 \quad  \text{ and }  \quad  \gamma_H(2):= \frac{(n-2)^2}{4}.$$ 
\end{remark}

\begin{remark}
We point out that the value $q_{crit}(\alpha)$  corresponds to the value $q = 4$ obtained in \cite[Proposition 3]{Jaber}.  Indeed,  when $\alpha =2, $ $\gamma = 0$ and $n =3,$  our problem turns to the perturbed Hardy-Sobolev equation considered by Jaber \cite{Jaber} in the Riemannian setting. We then have that $\beta_+(2,0) = n-\alpha = 1,$ $\beta_-(2,0) =0, $ and  therefore 
$$q_{crit}(2) = 2^*_\alpha - 2 \frac{\beta_+(2,0)-\beta_-(2,0)}{n- \alpha} = 6 - 2 = 4.$$

\end{remark}

\section{ The non-local case}\label{Sec:non-local case}

Throughout this section, we shall assume that $ 0<\alpha < 2,$
 %\begin{equation}\label{Condition:non-local}
%  0<\alpha < 2,
% \end{equation}
which means $\fo$ is not a local operator. We start by recalling and introducing suitable function spaces for the variational principles that will be needed in the sequel. We shall study problems on bounded domains, but will start by recalling the properties of  $\displaystyle (-\Delta)^{\frac{\alpha}{2}}$ on the whole of $\R^n$, where  it can be defined on the 
Schwartz class $\mathcal{S}$ (the space of rapidly decaying $C^\infty$ functions on $\R^n$) via the Fourier transform,
\begin{equation*}
 (-\Delta)^{\frac{\alpha}{2}}u= \mathcal{F}^{-1}(| 2 \pi \xi|^{\alpha}\mathcal{F}(u)).
 \end{equation*}
Here, $ \mathcal{F}(u)$ is the Fourier transform of $u$, $\displaystyle \mathcal{F}(u)(\xi)=\int_{\R^n} e^{-2\pi i x.\xi} u(x) dx$. See Servadei-Valdinoci \cite{Hitchhikers guide} and references therein for the basics on the fractional Laplacian. For $\alpha \in (0,2)$, the fractional Sobolev space $\h$ is defined as the completion of $C_c^{\infty}(\R^n)$ under the norm 
$$\|u\|_{\h}^2= \int_{\mathbb{R}^n}|2\pi \xi |^{\alpha} |\mathcal{F}u(\xi)|^2 d\xi =\int_{\mathbb{R}^n} |(-\Delta)^{\frac{\alpha}{4}}u|^2 dx.$$
By Proposition 3.6 in Di Nezza-Palatucci-Valdinoci \cite{Hitchhikers guide} (see also Frank-Lieb-Seiringer \cite{Frank-Lieb-Seiringer}), the following relation holds: 
For $u \in \h,$ 
\begin{equation*}
 \int_{\mathbb{R}^n}|2\pi \xi |^{\alpha} |\mathcal{F}u(\xi)|^2 d\xi=\frac{C_{n,\alpha}}{2}\int_{\R^n}  \int_{\R^n} \frac{|u(x)- u(y)|^2}{|x-y|^{n+\alpha}} dxdy, \quad  \text{ where } C_{n,\alpha}=\frac{2^\alpha\Gamma\left(\frac{n+\alpha}{2}\right)}{\pi^{\frac{n}{2}}\left|\Gamma\left(-\frac{\alpha}{2}\right)\right|}. 
 \end{equation*}

The fractional Hardy inequality in $\R^n$ then states that 
\begin{equation}\label{fractional Trace Hardy inequality}
\gamma_H(\alpha):= \inf\left\{\frac{ \int_{\R^n} |({-}{ \Delta})^{\frac{\alpha}{4}}u|^2 dx  }{\int_{\R^n} \frac{|u|^2}{|x|^{\alpha}}dx};\, u \in \h \setminus \{0\}\right\}=2^\alpha \frac{\Gamma^2(\frac{n+\alpha}{4})}{\Gamma^2(\frac{n-\alpha}{4})},
\end{equation}
which means that the fractional Hardy-Schr\"odinger operator $ \LO$ is positive whenever $n >\alpha$ and $\gamma < \gamma_H(\alpha)$. 
%In this case, a Hardy-Sobolev type inequality holds for $ \LO$.
The best  constant in Hardy-Sobolev type inequalities on $\R^n$ defined as 
\begin{equation} \label{def:mu}
\mu_{\gamma,s,\alpha}(\R^n):= \inf\limits_{u \in \h \setminus \{0\}} \frac{ \int_{\R^n} |({-}{ \Delta})^{\frac{\alpha}{4}}u|^2 dx - \gamma \int_{\R^n} \frac{|u|^2}{|x|^{\alpha}}dx }{(\int_{\R^n} \frac{|u|^{\crits}}{|x|^{s}}dx)^\frac{2}{\crits}}.
\end{equation}
We shall use the extremal of  $\mu_{\gamma,s,\alpha}(\R^n)$ and its profile at zero and infinity to build appropriate  test-functions for the functional under study.
Note that any minimizer for (\ref{def:mu}) leads --up to a constant-- to a  
variational solution of the following borderline problem  on $\R^n$,
 \begin{equation}\label{Main:problem}
\left\{\begin{array}{rl}
({-}{ \Delta})^{\frac{\alpha}{2}}u- \gamma \frac{u}{|x|^{\alpha}}= {\frac{u^{\crits-1}}{|x|^s}} & \text{in }  {\R^n}\\
 u\geq 0\; ;\; u\not\equiv 0 \,\,\,\,\,\,\,\,\,\,\,\,\,\,\,\,\,  & \text{in }  \mathbb{R}^n.
\end{array}\right.
\end{equation}

 Let now $\Omega$ be a smooth bounded domain in $\R^n$ with $0$ in its interior. We then consider the fractional Sobolev sapce $\homega$ as  the closure of $C^\infty_0(\Omega)$ with respect to the norm 
$$\|u\|^2_{\homega} =  \frac{c_{n,\alpha}}{2}\int_{\R^n}  \int_{\R^n} \frac{|u(x)- u(y)|^2}{|x-y|^{n+\alpha}} dxdy.$$ 

%\quad  \text{ where }C_{n,\alpha}=\frac{2^\alpha\Gamma\left(\frac{n+\alpha}{2}\right)}{\pi^{\frac{n}{2}}\left|\Gamma\left(-\frac{\alpha}{2}\right)\right|}

Note first that inequality (\ref{fractional Trace Hardy inequality}) asserts that $\homega$ is embedded in the weighted space $L^2(\Omega, |x|^{-\alpha})$ and that this embeding is continuous. If $\gamma < \gamma_H(\alpha),$ it follows from (\ref{fractional Trace Hardy inequality}) that 
$$\|| u\|| = \left( \frac{C_{n,\alpha}}{2}\int_{\rn} \int_{\rn}\frac{|u(x)-u(y)|^2}{|x-y|^{n+\alpha}}\, dxdy- \gamma \int_{\Omega}\frac{u^2}{|x|^\alpha} \ dx \right)^{\frac{1}{2}} $$
is well-defined on $\homega$. Since $0 \le \gamma < \gamma_H(\alpha),$ the following inequalities then hold for any $u \in \homega,$  

\begin{equation}\label{comparable norms}
(1-\frac{\gamma}{\gamma_H(\alpha)}) \|u\|^2_{\homega} \le \|| u\||^2 \le (1+\frac{\gamma}{\gamma_H(\alpha)}) \|u\|^2_{\homega} .
\end{equation}
Thus, $\|| \ . \ \||$ is equivalent to the norm $\| \ . \ \|_{\homega}$.

We shall consider the following functional $\Phi : \homega \to \R$  whose critical points are solutions for (\ref{Main problem: M-P solutions}): $\text{ For } u \in \homega$, let
\begin{equation*}\label{Functional Phi: for mountain pass solutions}
\Phi(u)= \frac{1}{2} \|| u\||^2 -\frac{\lambda}{2} \io u^2 dx  -\frac{1}{2_{\alpha}^*(s)}\int_{\Omega} \frac{u_+^{2_{\alpha}^*(s)}}{|x|^{s}} dx -\frac{1}{q} \int_{\Omega} h u_+^q dx,
\end{equation*}
where  $u_+ = \max(0,u)$ is the non-negative part of $u.$  Note that any critical point %minimizer for 
of the functional $\Phi(u)$ is  essentially  a  variational solution of (\ref{Main problem: M-P solutions}). Indeed, we have for any $ v \in \homega,$ 
\begin{align*}
\langle \Phi'(u), v \rangle &= \frac{C_{n,\alpha}}{2}\int_{(\R^n)^2} \frac{(u(x)-u(y))(v(x)-v(y))}{|x-y|^{n+\alpha}} dxdy  - \int_{\mathbb{R}^n}(\frac{\gamma u}{|x|^\alpha}+ \lambda u+ \frac{u_+^{2_\alpha^*(s)-1}}{|x|^s} + h u_+^{q-1} ) v \ dx.
\end{align*}

\subsection{General condition of existence }

In this section, we investigate a general condition of existence of solution for (\ref{Main problem: M-P solutions}).

%\subsection{Non-local with $0 \in \Omega$}

\begin{theorem}\label{Thm M-P: main condition of existence}
Let $\Omega$ be a smooth bounded domain in $\R^n (n > \alpha)$ such that $0 \in \Omega,$ and let $\crits:= \frac{2(n-s)}{n-\alpha}, $  $0 \le  s < \alpha,$ $-\infty < \lambda < \lambda_1(\LO),$ and $0 \le \gamma < \gamma_H(\alpha).$ We consider $2 < q <2^*_\alpha,$ $h \in C^0(\overline{\Omega})$ and $h \ge0.$ . We also assume that there exists $w \in \homega$, $w \not \equiv 0$ and $w \ge 0 $ such that 
\begin{equation}\label{Formula M-P: general cond. for existence}
\sup\limits_{t \ge 0} \Phi(t w) <  \frac{\alpha-s}{2(n-s)} \mur^{\frac{n-s}{\alpha-s}}.  
\end{equation} 
Then, problem (\ref{Main problem: M-P solutions}) has a non-negative  solution in $\homega$.
\end{theorem}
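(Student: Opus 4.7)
The plan is to apply the Mountain Pass Lemma~\ref{Theorem MPT- Ambrosetti-Rabinowitz version} to $\Phi$ on $V=\homega$ and then use the ceiling (\ref{Formula M-P: general cond. for existence}) to force compactness of the resulting Palais--Smale sequence. I would first verify the mountain pass geometry: $\Phi(0)=0$ is trivial; since $\lambda<\lambda_1(\LO)$, the norm equivalence (\ref{comparable norms}) yields a coercivity estimate $\||u\||^2-\lambda\io u^2\,dx\ge c_0\|u\|_{\homega}^2$ for some $c_0>0$, and combined with the fractional Hardy--Sobolev inequality and the embedding $\homega\hookrightarrow L^q(\Omega)$ (valid since $q<2^*_\alpha$) one obtains $\Phi(u)\ge\tfrac{c_0}{2}\|u\|_{\homega}^2-C_1\|u\|_{\homega}^q-C_2\|u\|_{\homega}^{\crits}\ge\rho>0$ on a small sphere $\|u\|_{\homega}=R$, because $q,\crits>2$. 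For the far condition, I would take $v_0:=w$ and observe that $\Phi(tw)\to-\infty$ as $t\to\infty$ because $\crits>2$ and the coefficient of $t^{\crits}$ is strictly negative (using $w\ge 0$, $w\not\equiv 0$). Lemma~\ref{Theorem MPT- Ambrosetti-Rabinowitz version} then produces a Palais--Smale sequence $(u_k)\subset\homega$ at a level $c=c_w(\Phi)$ satisfying $0<\rho\le c\le\sup_{t\ge 0}\Phi(tw)<\tfrac{\alpha-s}{2(n-s)}\mur^{(n-s)/(\alpha-s)}$.

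Boundedness of $(u_k)$ in $\homega$ follows from the standard algebraic identity, with $\theta:=\min(q,\crits)>2$,
\begin{equation*}
\Phi(u_k)-\tfrac{1}{\theta}\langle\Phi'(u_k),u_k\rangle=\bigl(\tfrac12-\tfrac{1}{\theta}\bigr)\bigl(\||u_k\||^2-\lambda\io u_k^2\,dx\bigr)+\bigl(\tfrac{1}{\theta}-\tfrac{1}{\crits}\bigr)\io\frac{u_k^{\crits}}{|x|^s}\,dx+\bigl(\tfrac{1}{\theta}-\tfrac{1}{q}\bigr)\io h u_k^q\,dx,
\end{equation*}
whose three coefficients are all non-negative; together with $\Phi(u_k)=c+o(1)$, $\langle\Phi'(u_k),u_k\rangle=o(\|u_k\|)$ and the coercivity, this yields $\|u_k\|_{\homega}\le C$. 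Passing to a subsequence, $u_k\rightharpoonup u$ weakly in $\homega$, strongly in $L^2(\Omega)\cap L^q(\Omega)$, and pointwise a.e.; passing to the limit in $\langle\Phi'(u_k),\varphi\rangle\to 0$ for $\varphi\in C_c^\infty(\Omega)$ identifies $u$ as a weak critical point of $\Phi$, and testing $\Phi'(u)=0$ against $-u_-$ together with the coercivity forces $u\ge 0$.

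The main obstacle is ruling out concentration of $(u_k)$ at the origin, which is exactly where the ceiling (\ref{Formula M-P: general cond. for existence}) becomes decisive. Setting $v_k:=u_k-u$, the Brezis--Lieb lemma applied both to $\||\cdot\||^2$ and to the Hardy--Sobolev nonlinearity gives
\begin{equation*}
\||u_k\||^2=\||u\||^2+\||v_k\||^2+o(1),\qquad \io\frac{u_k^{\crits}}{|x|^s}\,dx=\io\frac{u^{\crits}}{|x|^s}\,dx+\io\frac{v_k^{\crits}}{|x|^s}\,dx+o(1),
\end{equation*}
while the subcritical $L^2$ and $L^q$ integrals pass to the strong limit by compactness. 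Using $\langle\Phi'(u_k),u_k\rangle\to 0$ together with $\langle\Phi'(u),u\rangle=0$ yields $L:=\lim\||v_k\||^2=\lim\io v_k^{\crits}/|x|^s\,dx$, and the definition of $\mur$ forces $L\ge\mur L^{2/\crits}$, so either $L=0$ or $L\ge\mur^{(n-s)/(\alpha-s)}$. The energy expansion reads $c=\Phi(u)+\tfrac{\alpha-s}{2(n-s)}L$, and applying the identity of the previous paragraph to $u$ (where $\langle\Phi'(u),u\rangle=0$) gives $\Phi(u)\ge 0$. Combined with the strict upper bound $c<\tfrac{\alpha-s}{2(n-s)}\mur^{(n-s)/(\alpha-s)}$, the alternative $L\ge\mur^{(n-s)/(\alpha-s)}$ is excluded, so $L=0$. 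Therefore $u_k\to u$ strongly in $\homega$, and $\Phi(u)=c\ge\rho>0$ forces $u\not\equiv 0$, producing the desired non-trivial non-negative solution of (\ref{Main problem: M-P solutions}).
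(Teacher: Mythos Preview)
Your proposal is correct and follows essentially the same route as the paper: the paper likewise splits the argument into (i) verifying the mountain-pass geometry for $\Phi$ to obtain a Palais--Smale sequence at a level $c\le\sup_{t\ge 0}\Phi(tw)$ (its Proposition~\ref{Prop M-P: existence of P-S sequence and level set}), (ii) proving boundedness of any PS sequence and then, via a Brezis--Lieb type splitting of $\||u_k-u\||^2$ and $\int_\Omega |x|^{-s}(u_k-u)_+^{\crits}$, using the strict ceiling $c<\tfrac{\alpha-s}{2(n-s)}\mur^{(n-s)/(\alpha-s)}$ to force $u_k\to u$ strongly (its Proposition~\ref{Prop M-P: existence of strong limit for all P-S sequences}), and (iii) testing the limiting equation against $u_-$ to obtain non-negativity. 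Your only cosmetic differences are the use of $\Phi(u_k)-\tfrac{1}{\theta}\langle\Phi'(u_k),u_k\rangle$ with $\theta=\min(q,\crits)$ for boundedness (the paper instead subtracts $\tfrac12\langle\Phi'(u_k),u_k\rangle$) and the explicit dichotomy phrasing ``$L=0$ or $L\ge\mur^{(n-s)/(\alpha-s)}$'' (the paper reaches the same conclusion by a direct inequality chain), neither of which changes the substance.
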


We split the proof in three parts:

\subsubsection{The Palais-Smale condition below a critical threshold}

%\section{General condition of existence }

In this subsection, we prove the following

% investigate a general condition of existence of solution for (\ref{problem: Perturbed - Mountain pass}), and  prove the following Theorem.
%\begin{theorem}\label{Thm M-P: main condition of existence}
%Assume that the hypotheses in Theorem \ref{Thm M-P: Main result } hold. We also assume that there exists $w \in \homega$, $w \not \equiv 0$ and $w \ge 0 $ such that 
%\begin{equation}\label{Formula M-P: general cond. for existence}
%\sup\limits_{t \ge 0} \Phi(t w) < d \Lambda^{\frac{n-s}{\alpha-s}} \quad  \text{ where } d:=d_{n,s,\alpha}= \frac{2(\alpha-s)}{n-s}. 
%\end{equation} 
%Then, problem (\ref{problem: Perturbed - Mountain pass}) has a non-negative  solution in $\homega$.
%\end{theorem}
%The first step to prove Theorem \ref{Thm M-P: main condition of existence} is the following proposition.

\begin{proposition}\label{Prop M-P: existence of strong limit for all P-S sequences} If $c < \frac{\alpha-s}{2(n-s)} \mu_{\gamma,s, \alpha}(\R^n)^{\frac{n-s}{\alpha-s}} $, then every Palais-Smale sequence $(u_k)_{k\in \N}$ for $\Phi$ at level $c$ has a convergent subsequence in $\homega$.
%Let $ d:=d_{n,s,\alpha}= \frac{2(\alpha-s)}{n-s}.$ 
%and $\mu := \mu_{\gamma,s,\alpha}(\R^n).$ 
%If $c <d \Lambda^{\frac{n-s}{\alpha-s}} $ and all Palais-Smale sequence $(u_k)_{k\in \N}$ for $\Phi$ at level $c,$ there exists $u \in \homega$ such that $$\Phi(u) = c \quad \text{ and } \quad  \Phi'(u) = 0.$$
%Moreover, we have up to a subsequence 
%$$ u_k \to u \text{ strongly in }  \homega.$$
\end{proposition}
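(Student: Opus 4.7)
The plan is to follow a standard concentration-compactness argument in the spirit of Brezis-Nirenberg and Jannelli, adapted to the non-local Hardy-Sobolev setting. The first step is boundedness of $(u_k)$ in $\homega$. For any fixed $\theta \in (2, \min(q, \crits))$, the combination $\Phi(u_k) - \tfrac{1}{\theta}\langle \Phi'(u_k), u_k\rangle$ equals $(\tfrac12 - \tfrac1\theta)(\||u_k\||^2 - \lambda \int_\Omega u_k^2\,dx)$ plus non-negative remainders (since $\crits, q > \theta$ and $h \ge 0$). The assumption $\lambda < \lambda_1(\LO)$ together with the variational characterization of $\lambda_1$ makes this quadratic form coercive on $\homega$; combined with $\Phi(u_k) \to c$ and $\langle \Phi'(u_k), u_k\rangle = o(\|u_k\|_{\homega})$, boundedness follows.

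After extracting a subsequence, $u_k \rightharpoonup u$ weakly in $\homega$, pointwise a.e.\ in $\Omega$, and strongly in $L^p(\Omega)$ for every $p < 2^*_\alpha$. Since $q < 2^*_\alpha$, the subcritical integrals $\int u_k^2$ and $\int h u_{k,+}^q$ converge to their expected limits, and passing to the limit in $\langle \Phi'(u_k), \varphi\rangle \to 0$ on $\varphi \in C_c^\infty(\Omega)$ identifies $u$ as a weak solution of the Euler-Lagrange equation, so $\Phi'(u) = 0$. Testing against $u$ itself yields $\Phi(u) = (\tfrac12 - \tfrac{1}{\crits})\int u_+^{\crits}/|x|^s\,dx + (\tfrac12 - \tfrac1q)\int h u_+^q\,dx \ge 0$. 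A routine testing of $\Phi'(u_k)$ against $u_{k,-}$ shows $u_{k,-} \to 0$ strongly, so one may assume $u_k \ge 0$ and $u \ge 0$ for the rest of the proof.

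Set $v_k := u_k - u$, extended by zero to $\R^n$. Brezis-Lieb-type decompositions, applied to the Gagliardo double integral (via the $L^2(\R^{2n})$ lemma on $(x,y) \mapsto (u_k(x)-u_k(y))/|x-y|^{(n+\alpha)/2}$), the Hardy integral $\int u_k^2/|x|^\alpha$ (weighted $L^2$ lemma, legitimate thanks to a.e.\ convergence together with the uniform $L^2(\Omega, |x|^{-\alpha})$-bound from (\ref{fractional Trace Hardy inequality})), and the critical Hardy-Sobolev integral $\int u_k^{\crits}/|x|^s$, combined with the compactness of the subcritical terms, yield
\begin{equation*}
\Phi(u_k) = \Phi(u) + \tfrac{1}{2}\||v_k\||^2 - \tfrac{1}{\crits}\int \frac{|v_k|^{\crits}}{|x|^s}\,dx + o(1),
\end{equation*}
\begin{equation*}
o(1) = \langle \Phi'(u_k), u_k\rangle - \langle \Phi'(u), u\rangle = \||v_k\||^2 - \int \frac{|v_k|^{\crits}}{|x|^s}\,dx + o(1).
\end{equation*}
Setting $a := \lim_k \int |v_k|^{\crits}/|x|^s$, the second identity forces $\||v_k\||^2 \to a$. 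The Hardy-Sobolev inequality (\ref{def:mu}) applied to $v_k$ gives $\||v_k\||^2 \ge \mur \bigl(\int |v_k|^{\crits}/|x|^s\bigr)^{2/\crits}$, so in the limit $a \ge \mur a^{2/\crits}$; either $a = 0$ or $a \ge \mur^{(n-s)/(\alpha-s)}$ (using $\tfrac{\crits}{\crits-2} = \tfrac{n-s}{\alpha-s}$). Feeding this into the first line gives $c = \Phi(u) + \tfrac{\alpha-s}{2(n-s)} a \ge \tfrac{\alpha-s}{2(n-s)} a$; the strict upper bound on $c$ rules out $a \ge \mur^{(n-s)/(\alpha-s)}$, so $a = 0$. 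Hence $\||v_k\||^2 \to 0$, and by the norm equivalence (\ref{comparable norms}) $u_k \to u$ strongly in $\homega$.

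The main technical obstacle is making the three Brezis-Lieb splittings rigorous in this non-local Hardy setting, especially for the Hardy integral whose weight $|x|^{-\alpha}$ is not locally integrable: the weighted $L^2$ Brezis-Lieb lemma requires precisely the uniform $L^2(|x|^{-\alpha})$-bound supplied by (\ref{fractional Trace Hardy inequality}), and for the Gagliardo seminorm one passes to $\R^{2n}$ to invoke the flat $L^2$ version. Once these splittings are in hand the rest is algebraic bookkeeping together with a single application of the sharp Hardy-Sobolev inequality at the critical threshold.
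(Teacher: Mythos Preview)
Your proof is correct and follows essentially the same concentration-compactness route as the paper: boundedness from the Palais--Smale conditions and coercivity of the quadratic form for $\lambda<\lambda_1(\LO)$, weak limit $u$ is a critical point with $\Phi(u)\ge 0$, a Brezis--Lieb type splitting of the energy of $v_k=u_k-u$, and the sharp Hardy--Sobolev inequality to force the dichotomy $a=0$ or $a\ge \mur^{(n-s)/(\alpha-s)}$, with the latter excluded by the level bound. The only cosmetic difference is that the paper obtains the key identity $\||u_k-u\||^2=\int_\Omega (u_k-u)_+^{\crits}/|x|^s\,dx+o(1)$ by expanding $\langle \Phi'(u_k)-\Phi'(u),u_k-u\rangle$ directly, whereas you reach the equivalent relation by applying Brezis--Lieb separately to the Gagliardo, Hardy and Hardy--Sobolev integrals and then subtracting; both packagings are standard and lead to the same conclusion.
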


\begin{proof}[Proof of Proposition \ref{Prop M-P: existence of strong limit for all P-S sequences}]
Assume $c < \frac{\alpha-s}{2(n-s)}  \mur^{\frac{n-s}{\alpha-s}}$ and  let$(u_k)_{k\in \N} \in \homega$ be a Palais-Smale sequence for $\Phi$ at level $c,$ that is $\Phi(u_k) \to c$ and 
\begin{equation}\label{Palais-Smale conditions: perturbed nonlinearity}
 \ \Phi'(u_k) \to 0 \quad \text{ in } (\homega)',
\end{equation}
where   $(\homega)'$ denotes the dual of $ \homega$.\\
We first prove that $(u_k)_{k\in \N}$ is bounded in $\homega.$ One can  use $u_k \in \homega$ as a test function in (\ref{Palais-Smale conditions: perturbed nonlinearity})  to get that 
\begin{equation}
\|| u_k\||^2 -  \lambda \io u_k^2 dx  =   \int_{\Omega} \frac{(u_k)_+^{2_{\alpha}^*(s)}}{|x|^{s}} dx + \int_{\Omega} h (u_k)_+^q dx +o(\| |u_k\| |) \quad \text{ as } k \to \infty.
\end{equation}
On the other hand, from the definition of $\Phi,$ we deduce that 
\begin{equation}
\|| u_k\||^2 -  \lambda \io u_k^2 dx   = 2 \Phi(u_k) +\frac{2}{2_{\alpha}^*(s)}\int_{\Omega} \frac{(u_k)_+^{2_{\alpha}^*(s)}}{|x|^{s}} + \frac{2}{q} \int_{\Omega} h (u_k)_+^q dx.
\end{equation}
It follows from the last two identities that as $ k \to \infty$,
\begin{equation}
2 \Phi_q(u_k) = \left(1- \frac{2}{2_{\alpha}^*(s)}\right) \int_{\Omega} \frac{(u_k)_+^{2_{\alpha}^*(s)}}{|x|^{s}} dx +\left(1 - \frac{2}{q} \right) \int_{\Omega} h (u_k)_+^q dx + o(\|| u_k \||).
\end{equation}
This coupled with the Palais-Smale condition $\Phi(u_k) \to c,$ and the fact that $h\ge 0$ yield 
\begin{align*}
2 c &= \left(1- \frac{2}{2_{\alpha}^*(s)}\right) \int_{\Omega} \frac{(u_k)_+^{2_{\alpha}^*(s)}}{|x|^{s}} dx  +\left(1 - \frac{2}{q} \right) \int_{\Omega} h (u_k)_+^q dx + o(1)\\
& \ge  \left(1- \frac{2}{2_{\alpha}^*(s)}\right) \int_{\Omega} \frac{(u_k)_+^{2_{\alpha}^*(s)}}{|x|^{s}} dx +o
(1) \quad \text{ as } k \to \infty.
\end{align*}
Thus, 
$$\left(1 - \frac{2}{q} \right) \int_{\Omega} h (u_k)_+^q dx = O(1)  \quad \text{ as } k \to \infty.$$
We finally obtain 
$$ \|| u_k \||^2 - \lambda \io u_k^2 dx  \le  O(1) + o(\|| u_k\||) \quad \text{ as } k \to \infty. $$
Using that $\lambda < \lambda_1(\LO)$ and $ \gamma < \gamma_H(\alpha),$
%the definition of $\lambda_1(\LO)$ and  $\gamma_H(\alpha),$  
we get that
\begin{align*}
0 & < \left(1 - \frac{\gamma}{\gamma_H(\alpha)} \right) \left(1 - \frac{\lambda}{\lambda_1(\LO)} \right) \| u_k\|_{\homega}^2\\
& \le \left(1 - \frac{\lambda}{\lambda_1(\LO)} \right) \|| u_k\||^2\\
&  \le  \|| u_k\||^2 - \lambda \io u_k^2 dx  \le  O(1) + o(\|| u_k \||) \quad \text{ as } k \to \infty. 
\end{align*}
%Note that the first inequality above follows from the conditions $\lambda < \lambda_1(\LO)$ and $ \gamma < \gamma_H(\alpha).$ 
We then deduce that 
%there exists a positive constant $C$ such that  $$\| u_k \|_{\homega}^2 \le C \quad \text{ for all } k \in \N,$$ and  therefore  
$(u_k)_{k\in \N}$ is bounded in $\homega,$  which implies that there exists $u \in \homega$ such that, up to a subsequence, 
\begin{align}\label{Formula M-P: embedding results}
\bs
& (1) \quad  u_k \rightharpoonup  u \text{ weakly in }  \homega. \\
& (2)  \quad u_k  \to u \text{ strongly in }  L^{p_1}(\Omega) \ \text{ for all } p_1 \in [2, 2^*_\alpha).\\
& (3) \quad  u_k  \to u \text{ strongly in }  L^{p_2}(\Omega, |x|^{-s}dx) \ \text{ for all } p_2 \in [2, 2^*_\alpha(s)).
\es
\end{align}
We now claim that, up to a subsequence, we have
\begin{equation}\label{Formula M-P: (u_k-u) relation of the norm and H-S term}
\|| u_k - u\||^2   =   \int_{\Omega} \frac{(u_k-u)_+^{2_{\alpha}^*(s)}}{|x|^{s}} dx +o(1) \quad \text{ as } k \to \infty,
\end{equation}
and 
\begin{equation}\label{Formula M-P: d . norm(u_k-u) < c +o(1) }
\frac{\alpha-s}{2(n-s)} \|| u_k - u\||^2  \le c  +o(1) \quad \text{ as } k \to \infty.
\end{equation}
Indeed, straightforward computations yield
\begin{align}\label{Formula M-P: testing u_k-u at Phi-prim}
\bs
o(1) &= \langle \Phi'(u_k) - \Phi'(u), u_k - u \rangle \\
& =  \|| u_k - u\||^2 - \lambda \io (u_k - u)^2   \\
& - \int_{\Omega}  (u_k - u)  \frac{\left(( u_k )_+^{2_{\alpha}^*(s)-1} -  u_+^{2_{\alpha}^*(s)-1}\right)}{|x|^{s}} dx\\
& + \int_{\Omega} h (u_k - u)  \left[ ( u_k )_+^{q-1} - u_+^{q-1} \right] dx   \quad \text{ as } k \to \infty. 
\es
\end{align}
We first write

\begin{align*}
\io   (u_k - u)  \frac{\left(( u_k )_+^{2_{\alpha}^*(s)-1} -  u_+^{2_{\alpha}^*(s)-1}\right)}{|x|^{s}} dx &= \io     \frac{( u_k )_+^{2_{\alpha}^*(s)} }{|x|^{s}} dx - \io    u _k \frac{  u_+^{2_{\alpha}^*(s)-1} }{|x|^{s}} dx\\
& - \io    u  \frac{  (u_k)_+^{2_{\alpha}^*(s)-1} }{|x|^{s}} dx + \io     \frac{ u_+^{2_{\alpha}^*(s)} }{|x|^{s}} dx.     
\end{align*}
It now follows from integral theory that 
$$ \lim\limits_{k \to \infty} \io    u _k \frac{  u_+^{2_{\alpha}^*(s)-1} }{|x|^{s}} dx  = \io     \frac{ u_+^{2_{\alpha}^*(s)} }{|x|^{s}} dx = \lim\limits_{k \to \infty}  \io    u  \frac{  (u_k)_+^{2_{\alpha}^*(s)-1} }{|x|^{s}} dx. $$
So, we get that 
\begin{align*}
\io   (u_k - u)  \frac{\left(( u_k )_+^{2_{\alpha}^*(s)-1} -  u_+^{2_{\alpha}^*(s)-1}\right)}{|x|^{s}} dx &= \io     \frac{( u_k )_+^{2_{\alpha}^*(s)} }{|x|^{s}} dx - \io     \frac{ u_+^{2_{\alpha}^*(s)} }{|x|^{s}} dx.     
\end{align*}
In order to deal with the right hand side of the last identity, we use the following basic inequality:
$$ \left|  ( u_k )_+^{2_{\alpha}^*(s)}  -   u_+^{2_{\alpha}^*(s)}    -      (u_k - u)_+^{2_{\alpha}^*(s)}  \right|  \le c \Bigg(  u_+^{2_{\alpha}^*(s)-1}  |u_k -u| +  (u_k -u)_+^{2_{\alpha}^*(s)-1}  |u| \Bigg),$$ for some constant $c>0.$
We multiply both sides of the above inequality by $|x|^{-s}$ and take integral over $\Omega,$ and then use (\ref{Formula M-P: embedding results}) to get that

\begin{align*}
\lim\limits_{k \to \infty} \io  \frac{( u_k )_+^{2_{\alpha}^*(s)} - (u_k - u)_+^{2_{\alpha}^*(s)} }{|x|^s}  dx  =  \io  \frac{u _+^{2_{\alpha}^*(s)}}{|x|^s}   dx.
\end{align*}  
We therefore have 
$$ \int_{\Omega}  (u_k - u)  \frac{\left(( u_k )_+^{2_{\alpha}^*(s)-1} -  u_+^{2_{\alpha}^*(s)-1}\right)}{|x|^{s}} dx = \int_{\Omega} \frac{(u_k-u)_+^{2_{\alpha}^*(s)}}{|x|^{s}} dx +o(1)  \text{ as } k \to \infty. $$
In addition, the embeddings (\ref{Formula M-P: embedding results}) yield that
$$ \io (u_k - u)^2 = o(1) \quad \text{ as } k \to \infty,   $$ and
$$ \int_{\Omega} h (u_k - u)  \left[ ( u_k )_+^{q-1} - u_+^{q-1} \right] dx = \io (u_k - u)_+^q dx + o(1) = o(1)  \ \text{ as } k \to \infty.$$
Plugging back the last three estimates into (\ref{Formula M-P: testing u_k-u at Phi-prim}) gives (\ref{Formula M-P: (u_k-u) relation of the norm and H-S term}). On the other hand, since $u$ is a weak solution of (\ref{Main problem: M-P solutions}), then $\Phi(u)\ge 0,$ and since $\Phi(u_k) \to c $ as $k \to \infty,$ it follows that  $ \frac{\alpha-s}{2(n-s)} \| u_k - u\|^2  \le c  +o(1).$ This proves the claim. \\
We now show that %$u_k \to u$ strongly in $\homega,$ that is 
\begin{equation}\label{Formula M-P: u_k strongly in homega}
\lim\limits_{k \to \infty} u_k = u  \ \text{  in } \homega.
\end{equation}
Indeed, %In order to prove (\ref{Formula M-P: u_k strongly in homega}), we 
test 
%the best fractional Hardy-Sobolev inequality $\mu_{\gamma,s,\alpha}(\Omega)$ defined as in 
the inequality (\ref{Problem: the best fractional H-Sconstant on bounded domain }) on $u_k-u,$ and use (\ref{Formula M-P: embedding results}) and (\ref{Formula: mu(Omega)= mu(R^n)}) to obtain that 
\begin{equation}\label{Formula M-P: testing (u_k-u) at H-S inequalities}
\int_{\Omega} \frac{(u_k-u)_+^{2_{\alpha}^*(s)}}{|x|^{s}} dx \le \mur^{-\frac{\crits}{2}} \|| u_k - u \||^{\crits} +o(1).
\end{equation}
Combining this with (\ref{Formula M-P: (u_k-u) relation of the norm and H-S term}), we get 

$$\|| u_k-u \||^2 \left(   1 -  \mur^{-\frac{\crits}{2}}  \|| u_k-u \||^{\crits - 2}     +o(1) \right) \le o(1).$$
It then follows from the last inequality and (\ref{Formula M-P: d . norm(u_k-u) < c +o(1) }) that 

$$\left(   1 -  \mur^{-\frac{\crits}{2}} \Big(\frac{2(n-s)}{\alpha-s} c \Big)^{\frac{\crits - 2}{2}}     +o(1) \right) \|| u_k-u \||^2  \le o(1).$$
Note  that the assumption $ c < \frac{\alpha-s}{2(n-s)}  \mur^{\frac{n-s}{\alpha-s}}$ implies that $$\Big( \frac{2(n-s)}{\alpha-s} c \Big)^{\frac{\crits - 2}{2}} < \mur^{\frac{\crits}{2}},$$ and therefore
$$\left(   1 -  \mur^{-\frac{\crits}{2}} \Big(\frac{2(n-s)}{\alpha-s} c \Big)^{\frac{\crits - 2}{2}}    \right) > 0.$$
Thus,  $\|| u_k-u \|| \to 0$ as $k \to \infty,$ and this proves (\ref{Formula M-P: u_k strongly in homega}).\\
Finally, 
%The last step to complete the proof of the proposition is to show that
we have that $\Phi(u) = c,$ since the functional is continuous on $\homega$.
%which is a direct consequence of (\ref{Palais-Smale conditions: perturbed nonlinearity}) and (\ref{Formula M-P: u_k strongly in homega}).

\end{proof}

\subsubsection{Mountain pass geometry and existence of a Palais-Smale sequence}

%The next proposition yields from the Mountain Pass lemma by Ambrosetti-Rabinowitz (see Lemma \ref{Theorem MPT- Ambrosetti-Rabinowitz version}) which grantees the existence of Palais-Smale sequence for $\Phi$ at a suitable level set:
In this subsection, we prove the following
\begin{proposition}\label{Prop M-P: existence of P-S sequence and level set} For every $w\in \homega \setminus\{0\}$ with $w\geq 0$, there exists an energy level $c$, with 
\begin{equation}
0<c \leq 
%\inf\limits_{w\in \homega \setminus \{0\}} 
\sup\limits_{t \ge 0} \Phi(t w),
\end{equation}
 and  a Palais-Smale sequence $(u_k)_k$ for $\Phi$ at level $c,$ that is 
\begin{equation*}
\Phi(u_k) \to c \ \text{ and }  \ \Phi'(u_k) \to 0 \text{ in } (\homega)'.
\end{equation*}

\end{proposition}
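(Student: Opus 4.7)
The statement is the classical conclusion of the Ambrosetti--Rabinowitz Mountain Pass Lemma (Lemma \ref{Theorem MPT- Ambrosetti-Rabinowitz version}) applied to $\Phi$ on $V=\homega$ with $v_0=w$. So the plan is to verify the three hypotheses of that lemma and then read off both the existence of the Palais--Smale sequence at a positive level and the upper bound $c\le \sup_{t\ge 0}\Phi(tw)$.

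\textbf{Step 1: $\Phi\in C^1(\homega,\R)$ and $\Phi(0)=0$.} The first is standard since $\crits,q<2^*_\alpha$ and the relevant embeddings (Sobolev and Hardy--Sobolev) are continuous; the second is immediate from the definition of $\Phi$.

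\textbf{Step 2: The mountain pass geometry at the origin (condition (b)).} I would estimate $\Phi(u)$ from below on a small sphere $\|u\|_{\homega}=R$. Using $\lambda<\lambda_1(\LO)$ and the definition of $\lambda_1$, together with $\gamma<\gamma_H(\alpha)$ and (\ref{comparable norms}), there exists $C_0>0$ with
\[
\|| u\||^2 -\lambda\io u^2\,dx \;\ge\; C_0\,\|u\|_{\homega}^{2}.
\]
The critical term is controlled by the Hardy--Sobolev inequality:
\[
\int_{\Omega}\frac{u_+^{\crits}}{|x|^s}\,dx \;\le\; \mur^{-\crits/2}\,\|| u\||^{\crits}\;\le\; C_1\,\|u\|_{\homega}^{\crits},
\]
and the subcritical term by the continuous embedding $\homega\hookrightarrow L^q(\Omega)$:
\[
\int_{\Omega} h\,u_+^q\,dx \;\le\; \|h\|_\infty\,C_2\,\|u\|_{\homega}^{q}.
\]
Combining these,
\[
\Phi(u) \;\ge\; \tfrac{C_0}{2}\,\|u\|_{\homega}^{2} - \tfrac{C_1}{\crits}\,\|u\|_{\homega}^{\crits} - \tfrac{\|h\|_\infty C_2}{q}\,\|u\|_{\homega}^{q}.
\]
Since $2<q<2^*_\alpha$ and $2<\crits\le 2^*_\alpha$, the last two terms are of strictly higher order than the quadratic one, so for $R>0$ sufficiently small there exists $\rho>0$ with $\Phi(u)\ge\rho$ whenever $\|u\|_{\homega}=R$.

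\textbf{Step 3: Negativity at infinity along $w$ (condition (c)).} For fixed $w\ge 0$, $w\not\equiv 0$, write
\[
\Phi(tw) \;=\; \tfrac{t^2}{2}\bigl(\|| w\||^2-\lambda\io w^2\bigr) - \tfrac{t^{\crits}}{\crits}\int_\Omega\frac{w^{\crits}}{|x|^s}\,dx - \tfrac{t^q}{q}\int_\Omega h\,w^q\,dx.
\]
Since $w\not\equiv 0$ and $0\in\Omega$, we have $\int_\Omega |x|^{-s}w^{\crits}\,dx>0$; together with $\crits>2$ and $\crits>q$, this forces $\Phi(tw)\to-\infty$ as $t\to+\infty$. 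Choose $t_0>0$ large enough that $\|t_0 w\|_{\homega}>R$ and $\Phi(t_0 w)<0$.

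\textbf{Step 4: Conclusion via the Mountain Pass Lemma.} Lemma \ref{Theorem MPT- Ambrosetti-Rabinowitz version} yields a Palais--Smale sequence $(u_k)_k$ for $\Phi$ at the level $c:=c_w(\Phi)\ge\rho>0$. For the upper bound, take the straight path $\sigma(t)=t\,t_0 w\in\Gamma$; then
\[
c \;\le\; \sup_{t\in[0,1]}\Phi(\sigma(t)) \;=\; \sup_{t\in[0,1]}\Phi(t t_0 w) \;\le\; \sup_{t\ge 0}\Phi(tw),
\]
which is the required bound.

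\textbf{Main difficulty.} There is no single hard step; the delicate point is Step 2, where one must balance the Hardy--Sobolev control of the critical term against the subcritical perturbation while using the reservoir of positivity provided by $\lambda<\lambda_1(\LO)$ and $\gamma<\gamma_H(\alpha)$. Once these inequalities are lined up, the rest is a direct application of Lemma \ref{Theorem MPT- Ambrosetti-Rabinowitz version}.
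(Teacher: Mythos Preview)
Your proof is correct and follows essentially the same route as the paper: verify hypotheses (a)--(c) of Lemma~\ref{Theorem MPT- Ambrosetti-Rabinowitz version} for $\Phi$ on $\homega$, apply it, and bound $c$ from above by testing the straight path $t\mapsto t\,t_0 w$. One small slip in Step~3: you assert $\crits>q$, which need not hold when $s>0$ (since $q$ may lie in $(\crits,2^*_\alpha)$); this is harmless, however, because $\crits>2$ together with $\int_\Omega |x|^{-s}w^{\crits}\,dx>0$ and the nonnegativity of the $h$-term already gives $\Phi(tw)\to-\infty$.
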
 

%\begin{proposition}\label{Prop M-P: existence of P-S sequence and level set}
%We fix $w \in \homega$, $w \not \equiv 0$ and $w \ge 0.$ Then, there exist $c_w >0  $ and a sequence $(u_k)_{k \in \N}$ in $\homega,$ that is a Palais-Smale sequence for $\Phi$ at $c_w.$ Moreover, we have $c_w \le \sup\limits_{t \ge 0} \Phi(t w).$ 
%\end{proposition}

\begin{proof}%[Proof of Theorem \ref{Prop M-P: existence of P-S sequence and level set}]
We show  that the functional $\Phi$ satisfies the  hypotheses of the mountain pass lemma \ref{Theorem MPT- Ambrosetti-Rabinowitz version}. It is standard to show that $\Phi \in C^1(\homega)$ and clearly  $\Phi(0)=0$, so that (a) of Lemma \ref{Theorem MPT- Ambrosetti-Rabinowitz version} is satisfied. 

For (b), we show that $0$ is a strict local minimum. For that, we first need to recall  the definition of $\lambda_1(\LO),$ which is  the first eigenvalue of the operator $\LO$ with Dirichlet boundary condition, and the best constant in the fractional Hardy-Sobolev inequality on domain $\Omega,$ that is 
\begin{equation} \label{Problem: the best fractional H-Sconstant on bounded domain }
\mu_{\gamma,s,\alpha}(\Omega):= \inf\limits_{u \in \homega\setminus \{0\}} \frac{ \frac{c_{n,\alpha}}{2}\int_{\R^n}  \int_{\R^n} \frac{|u(x)- u(y)|^2}{|x-y|^{n+\alpha}} dxdy- \gamma \int_{\Omega} \frac{|u|^2}{|x|^{\alpha}}dx }{(\int_{\Omega} \frac{|u|^{2_{\alpha}^*(s)}}{|x|^{s}}dx)^\frac{2}{2_{\alpha}^*(s)}}.
\end{equation}
%\begin{center}
%\textbf{P{\footnotesize ART} 2.1. The linear perturbation}
%\end{center}
As in the local case, one can show by translating, scaling and cutting off the extremals of $ \mu_{\gamma,s,\alpha}(\R^n)$ that 
\begin{equation}\label{Formula: mu(Omega)= mu(R^n)}
\mu_{\gamma,s,\alpha}(\Omega)=\mu_{\gamma,s,\alpha}(\R^n).
\end{equation}

See Proposition 6.1 in \cite{GFSZ} for more detail. Therefore, we  have that 
$$\lambda_1(\LO)  \int_{\Omega} |w|^2 dx \le   \||w\||^2 \  \text{ and } \  \mur (\int_{\Omega} \frac{|w|^{2_{\alpha}^*(s)}}{|x|^{s}}dx)^\frac{2}{2_{\alpha}^*(s)}\le  \|| w\||^2.$$

In addition, it follows from $(\ref{Formula M-P: embedding results})_2$ that there exists a positive constant $S>0$ such that 
$$ S  (\int_{\Omega} h |w|^q dx)^{\frac{2}{q}} \le   \|| w\||^2. $$
Hence,
\begin{equation*}
\begin{aligned}
\Phi(w) &\ge  \frac{1}{2} \|| w\||^2 -\frac{1}{2} \frac{\lambda}{\lambda_1(\LO)}\|| w\||^2 -  \frac{1}{q} S^{-\frac{q}{2}} \|| w\||^{q}  \\ 
& \hspace{5.2cm}   -\frac{1}{2_{\alpha}^*(s)} \mur^{-\frac{2_{\alpha}^*(s)}{2}} \|| w\||^{2_{\alpha}^*(s)} \\
& \ = \||w\||^2 \Bigg( \frac{1}{2}(1 - \frac{\lambda}{\lambda_1(\LO)} )  -\frac{1}{q} S^{-\frac{q}{2}} \|| w\||^{q-2}\\
& \hspace{5.2cm} -\frac{1}{2_{\alpha}^*(s)} \mur^{-\frac{2_{\alpha}^*(s)}{2}} \|| w\||^{2_{\alpha}^*(s)-2} \Big).
\end{aligned}
\end{equation*}
Since $\lambda <\lambda_1(\LO),$ $ q \in (2, 2^*_\alpha)$ and $s \in [0,\alpha),$ we have that $1 - \frac{\lambda}{\lambda_1(\LO)} >0,$  $q-2 >0 $ and $2_{\alpha}^*(s)-2 >0,$ respectively. Thus,  we can find $R>0$ such that  $\Phi(w) \ge \rho$ for all $w \in \homega$ with $\|w\|_{\homega} = R.$\\
  Regarding (c), we have 
$$\Phi(tw) = \frac{t^2}{2} \|| w\||^2 -\frac{t^2 \lambda}{2} \io w^2 dx -\frac{ t^{ 2_{\alpha}^*(s)}  }{2_{\alpha}^*(s)} \io \frac{w_+^{2_{\alpha}^*(s)}}{|x|^{s}}  dx- \frac{t^q}{q} \io h w_+^q dx  ,$$
hence $ \lim\limits_{t \to \infty} \Phi(tw)=  -\infty$ for any $w \in \homega \setminus \{0\}$ with $w_+ \not \equiv 0$, which means that    there exists $t_w>0$ such that $\|t_w w\|_{\homega} > R$ and $\Phi(tw) <0,$ for $t\ge t_w.$ In other words,
$$
0 <\rho \leq \inf \{\Phi (w); \|w\|_{\homega} = R\} \leq c = \inf\limits_{\gamma \in {\mathcal F}}\sup\limits_{t\in [0, 1]} \Phi (\gamma (t))\leq \sup\limits_{t \ge 0} \Phi(t w),
$$
where ${\mathcal F}$ is the class of all path $\gamma \in C([0, 1]; \homega)$ with $\gamma (0)=0$ and $\gamma (1)=t_ww$. \\
The rest follows from the Ambrosetti-Rabinowitz lemma  \ref{Theorem MPT- Ambrosetti-Rabinowitz version}. 

\end{proof}

\subsubsection{End of Proof of Theorem \ref{Thm M-P: main condition of existence}}
%[XXX]We are now ready to prove the main result of this section. \begin{proof}[Proof of Theorem \ref{Thm M-P: main condition of existence}]
We assume that there exists there exists $w \in \homega,$ $w \not\equiv 0,$ $w \ge 0$ such that $$\sup\limits_{t \ge 0} \Phi(t w) < \frac{\alpha-s}{2(n-s)}  \mur^{\frac{n-s}{\alpha-s}}.$$
\textit{Existence:}
 It follows from Proposition \ref{Prop M-P: existence of P-S sequence and level set} that there exist  $c_w>0$ and a Palais-Smale sequence $(u_k)_{k \in \N}$ for $\Phi$ at level $c_w$ such that $$c_w \le \sup\limits_{t \ge 0} \Phi(t w).$$  It is obvious now that  $c_w < \frac{\alpha-s}{2(n-s)}  \mur^{\frac{n-s}{\alpha-s}}.$ So,  we can apply Proposition \ref{Prop M-P: existence of strong limit for all P-S sequences} to  get that there exists $u \in \homega$ such that, up to a subsequence, $ u_k \to u \text{ strongly in } \homega$ as $k \to \infty,$ and $ \Phi'(u) = 0.$ This means that $u$ is a solution for (\ref{Main problem: M-P solutions}).\\
\textit{Positivity:} We test  (\ref{Main problem: M-P solutions}) against $u_-\in\homega.$ Then,  arguing as in the proof of Lemma 5.2 in  \cite{GFSZ}, we get that $\|u_-\|_{\h} = 0,$ which implies that  $u_- \equiv 0$, and therefore $u \ge 0 $ on $\Omega.$
%\end{proof}

\subsection{Test functions estimates for the non-local case}\label{Sec:Test Functions}

This section is devoted to prove an important result (Proposition \ref{Prop M-P: estimate at the test functions}) which plays a crucial role in the proof of Theorem \ref{Thm M-P: Main result }.  Since $\Phi$ satisfies the Palais-Smale condition only up to level $ \frac{\alpha-s}{2(n-s)}  \mur^{\frac{n-s}{\alpha-s}}$, we need to check which conditions on $\gamma$, $q$, $h$ and the mass of the domain $\Omega$ guarantee that there exists a $w\in \homega$ such that 
\begin{equation}\label{Condition:General condition}
\sup\limits_{t \ge 0} \Phi(t w) < \frac{\alpha-s}{2(n-s)}  \mur^{\frac{n-s}{\alpha-s}}.
\end{equation}

We shall use the test functions constructed in \cite{GFSZ} to obtain the general condition of existence (\ref{Condition:General condition}).  More precisely, we use the extremal of $\mur$ and its profile at zero and infinity  to introduce appropriate test functions. We need such test functions to estimate the general existence condition (\ref{Condition:General condition}). Unlike the case of the Laplacian ($\alpha =2$), no explicit formula is known for the best constant $\mu_{\gamma,s,\alpha}(\R^n)$ nor for the extremals where it is achieved.
 The existence of such extremals  was proved in \cite{GS} under certain condition on $\gamma$ and $s.$  We therefore need to describe their asymptotic profile whenever they exist. This was recently considered in Ghoussoub-Robert-Shakerian-Zhao \cite{GFSZ}, where the following is proved:

 %It states that if $ 0 \le s < \alpha < n$, and  $ {\crits}={\frac{2(n-s)}{n-{\alpha}}}$, then $\mu_{\gamma,s,\alpha}(\R^n)$ is finite and strictly positive, where the latter is the best constant 
%\begin{equation} \label{def:mu}
%\mu_{\gamma,s,\alpha}(\R^n):= \inf\limits_{u \in \h \setminus \{0\}} \frac{ \int_{\R^n} |({-}{ \Delta})^{\frac{\alpha}{4}}u|^2 dx - \gamma \int_{\R^n} \frac{|u|^2}{|x|^{\alpha}}dx }{(\int_{\R^n} \frac{|u|^{\crits}}{|x|^{s}}dx)^\frac{2}{\crits}}.
%\end{equation}

%Unlike the case of the Laplacian ($\alpha =2$), no explicit formula is known for the best constant $\mu_{\gamma,s,\alpha}(\R^n)$ nor for the extremals where it is achieved. This was considered in  Ghoussoub-Shakerian \cite{GS}, where the following is proved. We therefore try to describe their asymptotic profile whenever they exist.  

\begin{theorem}\label{th:asymp:ext}
Assume $0 \le s<\alpha<2,$ $n> \alpha$ and $\displaystyle 0\leq\gamma<\gamma_H(\alpha)$. Then, any positive  extremal $\displaystyle u\in \h$ for $\mu_{\gamma,s,\alpha}(\R^n)$ satisfies $u\in C^1(\rn\setminus\{0\})$ and  
\begin{equation}
\lim_{x\to 0}|x|^{\Bm}u(x)=\lambda_0\hbox{ and }\lim_{|x|\to \infty}|x|^{\Bp}u(x)=\lambda_\infty,
\end{equation}
where $\lambda_0,\lambda_\infty>0$ and $\Bm$ (resp., $\Bp$) is the unique solution in 
$\left(0,\frac{n-\alpha}{2}\right)$ (resp., in $\left(\frac{n-\alpha}{2},n-\alpha\right))$ of the equation 

$$\Psi_{n,\alpha}(t):= 2^\alpha \frac{\Gamma(\frac{n-t}{2})\Gamma(\frac{\alpha+t}{2})}{  \Gamma(\frac{n-t-\alpha}{2})\Gamma(\frac{t}{2} ) }= \gamma.$$
%In particular, there exist $C_1,C_2>0$ such that 
%$$\frac{C_1}{|x|^{\Bm} + |x|^{\Bp}} \leq u(x)\leq \frac{C_2}{|x|^{\Bm} + |x|^{\Bp}} \quad \hbox{for all $x\in \rn\setminus\{0\}$.}$$

\end{theorem}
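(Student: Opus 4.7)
My plan is to study any positive extremal $u\in\h$ through its Euler--Lagrange equation, which after rescaling reads
\begin{equation*}
\fo u - \gamma\frac{u}{|x|^\alpha} = \frac{u^{\crits-1}}{|x|^s}\quad\text{on }\rn.
\end{equation*}
Since $u\in L^{2^*_\alpha}(\rn)$, a Moser/De Giorgi iteration adapted to $\fo$ yields $u\in L^\infty_{\textrm{loc}}(\rn\setminus\{0\})$, and the interior Schauder theory for the fractional Laplacian (\`a la Silvestre) bootstraps this to $u\in C^1(\rn\setminus\{0\})$. Positivity of $u$ on all of $\rn\setminus\{0\}$ then follows from the strong maximum principle for the Hardy--Schr\"odinger operator $\LO$.

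The second ingredient is the Kelvin transform $\tilde u(x):=|x|^{-(n-\alpha)}u(x/|x|^2)$. By the conformal covariance $\fo\tilde u(x)=|x|^{-(n+\alpha)}(\fo u)(x/|x|^2)$ and the invariance of $|x|^{-\alpha}$ and $|x|^{-s}$ under inversion, $\tilde u$ is again a positive solution of the same equation. Moreover, $\Psi_{n,\alpha}$ is invariant under $t\mapsto n-\alpha-t$, hence $\Bm+\Bp=n-\alpha$, and a direct computation shows that $u(y)\sim\lambda_\infty|y|^{-\Bp}$ as $|y|\to\infty$ if and only if $\tilde u(x)\sim\lambda_\infty|x|^{-\Bm}$ as $x\to 0$. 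It therefore suffices to produce the asymptotic at the origin for every positive solution in $\h$ of the above equation.

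The core identity is
\begin{equation*}
\fo(|x|^{-t})=\Psi_{n,\alpha}(t)\,|x|^{-t-\alpha}\quad\text{on }\rn\setminus\{0\},\qquad t\in(0,n-\alpha),
\end{equation*}
which identifies $v_-(x):=|x|^{-\Bm}$ and $v_+(x):=|x|^{-\Bp}$ as the two positive homogeneous solutions of the linearized Hardy equation $\LO w=0$. On a small ball $B_r$ the operator $\LO$ is coercive (since $\gamma<\gamma_H(\alpha)$) and admits a weak comparison principle. Taking combinations of the form $Av_--Bv_+$ as super- and subsolutions on shrinking annuli centred at $0$, and using that the exponent balance $s+\Bm(\crits-1)<\alpha+\Bm$ (equivalent to $\Bm<\frac{n-\alpha}{2}$) makes the nonlinear term $u^{\crits-1}/|x|^s$ of strictly lower order than the Hardy term in a scaling sense, I would obtain the two-sided bound $c|x|^{-\Bm}\le u(x)\le C|x|^{-\Bm}$ near $0$; the gap $\Bm<\Bp$ is precisely what forces positive solutions to select the weaker singularity.

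To upgrade this two-sided bound to an exact limit, I rescale $u_R(x):=R^{\Bm}u(Rx)$. The two-sided bounds together with the interior $C^1$-estimates make $(u_R)_R$ precompact on compact subsets of $\rn\setminus\{0\}$, and the rescaled nonlinear term carries a prefactor $R^{\alpha-s-\Bm(\crits-2)}$ that tends to $0$ as $R\to 0$ (the exponent is positive by $\Bm<\frac{n-\alpha}{2}$). Any cluster point $U$ is therefore a positive $\Bm$-homogeneous solution of $\LO U=0$ on $\rn\setminus\{0\}$. A Liouville-type classification---obtained from a spherical-harmonic decomposition and the simplicity of $\Bm$ as a root of $\Psi_{n,\alpha}(t)=\gamma$---forces $U(x)=\lambda_0|x|^{-\Bm}$ for some $\lambda_0>0$, giving the sharp limit at $0$; the limit at infinity then follows by applying the same argument to $\tilde u$. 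The main obstacle I anticipate is this last classification: for the nonlocal operator $\fo$ the equation $\LO U=0$ with $U$ homogeneous does \emph{not} reduce to a scalar radial ODE as it does in the local case, so ruling out other positive $\Bm$-homogeneous solutions requires delicate use of the Mellin/spectral structure of $\fo$.
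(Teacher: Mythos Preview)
The paper does not contain its own proof of this theorem. Immediately before the statement, the text reads ``This was recently considered in Ghoussoub-Robert-Shakerian-Zhao \cite{GFSZ}, where the following is proved,'' and the result is then used as a black box in the test-function constructions of Section~\ref{Sec:Test Functions} (see also Remark~\ref{Remark:fundamental solution of LO on R^n}). There is therefore nothing in this paper to compare your proposal against; any comparison would have to be with the argument in \cite{GFSZ}.

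Your outline is a reasonable sketch of how such asymptotics are typically obtained, and the scaling bookkeeping (Kelvin duality, the symmetry $\Bm+\Bp=n-\alpha$, the sign of the exponent $\alpha-s-\Bm(\crits-2)$) is correct. Two points would need tightening if you carry it out. First, subsequential convergence of $u_R$ does not by itself force the cluster point $U$ to be $\Bm$-homogeneous: you only know that $U$ is a positive solution of $\LO U=0$ on $\rn\setminus\{0\}$ with the two-sided bound $c|x|^{-\Bm}\le U\le C|x|^{-\Bm}$, and either the Liouville step must treat this wider class directly, or you must first upgrade to convergence of the full family. Second---and you already flag this---the spherical-harmonic reduction that works for the local Laplacian does not carry over to $\fo$, so the classification of positive solutions of $\LO U=0$ with the prescribed growth has to go through a genuinely nonlocal argument (barriers built from $|x|^{-\beta_\pm}$ and the strict monotonicity of $\Psi_{n,\alpha}$ on $(0,\frac{n-\alpha}{2})$), not an ODE.
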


\begin{remark}\label{Remark:fundamental solution of LO on R^n}
We point out that the functions $u_1(x) = |x|^{- \Bm}$ and  $u_2(x) = |x|^{- \Bp}$ are
%In order to prove Theorem \ref{Theorem-The profile of the extremal at zero and infinity 0<gamma< gamma_H and 0<s<alpha}, we first need to study the local behavior of solutions of the fractional Hardy-Schr\"odinger operator $ \LO:= (-\Delta)^{\frac{\alpha}{2}}-\frac{\gamma}{|x|^\alpha} $ on  $\R^n$. 
the fundamental solutions for the fractional Hardy-Schr\"odinger operator $ \LO:= (-\Delta)^{\frac{\alpha}{2}}-\frac{\gamma}{|x|^\alpha} $ on  $\R^n.$ Indeed, a straightforward computation  yields  (see Section 2 in \cite{GFSZ})
% $$(-\Delta)^\frac{\alpha}{2} |x|^{-\beta} = \Psi_{n,\alpha}(\beta)  |x|^{-\beta-\alpha} \quad   \hbox{ for  } \beta \in \{\Bm, \Bp\},$$ and we have 
$$ \LO |x|^{-\beta} =   (\Psi_{n,\alpha}(\beta) - \gamma ) |x|^{-\beta} = 0 \text{ on } \R^n \quad  \hbox{ for  } \beta \in \{\Bm, \Bp\},$$
which implies that $\Bp$ and $\Bm$ satisfy $\Psi_{n,\alpha}(\beta) = \gamma.$

\end{remark}

We then stablish the following.

\begin{proposition}\label{Prop M-P: estimate at the test functions}
%Let  $\Upsilon:= \frac{\alpha-s}{2(n-s)}  \mur^{\frac{n-s}{\alpha-s}}.$ Then, 
There exists $\tau_l >0 $ for $l = 1,..,5$ such that\\

1) If $0 \le \gamma \le \gamma_{crit}(\alpha),$ then

\begin{equation}\label{Formula M-P: Phi(u eps): NoN-critical}
\sup\limits_{t \ge 0}\Phi(t v_\eps) = \frac{\alpha-s}{2(n-s)}  \mur^{\frac{n-s}{\alpha-s}}  - \tau_1  h(0) \eps^{n - q\frac{n - \alpha}{2}} + o(\eps^{n- q\frac{n - \alpha}{2}}); 
\end{equation}

2) If $ \gamma_{crit}(\alpha) <  \gamma < \gamma_H(\alpha),$ then
\begin{align}\label{Formula M-P: Phi(u eps): Critical}
\bs
\sup\limits_{t \ge 0}\Phi(t v_\eps) &= \frac{\alpha-s}{2(n-s)}  \mur^{\frac{n-s}{\alpha-s}} \\
&+ \left\{\begin{array}{rl}
\displaystyle   - \tau_2  h(0) \eps^{n - q\frac{n - \alpha}{2}} + o(\eps^{n- q\frac{n - \alpha}{2}}) \,\,\,\,\,\,\,\,\,\,\,\,\,\,\,\,\,\,\,\,\,\,\,\,\,\,\,\,\,\,\,\,\,\,\,\,\,\,\,\,\,\,\,\,\,\,\,\,\,\,\,\,\,\,\,\,\,\,\,\,\,\,\,\,\,\,\,\, & \text{if }  q > q_{crit} \vspace{0.1cm}\\
- (\tau_3  h(0) + \tau_4 m^\alpha_{\gamma,\lambda}) \eps^{\Bp-\Bm} + o(\eps^{\Bp-\Bm})  & \text{if } q= q_{crit} \\ - \tau_5  m^\alpha_{\gamma,\lambda} \eps^{\Bp-\Bm} + o(\eps^{\Bp-\Bm})\,\,\,\,\,\,\,\,\,\,\,\,\,\,\,\,\,\,\,\,\,\,\,\,\,\,\,\,\, & \text{if } q <  q_{crit},
\end{array}\right. 
\es
\end{align}
where $q_{crit} := 2^*_\alpha - 2 \frac{\bp-\bm}{n-\alpha} \in  (2,2^*_\alpha).$

\end{proposition}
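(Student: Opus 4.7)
The plan is to use the test functions $v_\eps$ constructed in \cite{GFSZ} and to compute $\sup_{t\ge 0}\Phi(tv_\eps)$ by expanding each ingredient as $\eps\to 0$. Starting from a positive extremal $U$ of $\mur$, whose asymptotic profile at $0$ and $\infty$ is provided by Theorem \ref{th:asymp:ext}, rescale it as $U_\eps(x):=\eps^{-(n-\alpha)/2}U(x/\eps)$. In the critical range $\gamma>\gamma_{crit}(\alpha)$, $v_\eps$ is $U_\eps$ truncated near $\partial\Omega$ and corrected by a multiple of the regular part of the Green's function of $\LO-\lambda I$ on $\Omega$, so that $v_\eps\in\homega$; this correction is precisely what forces the mass $m^\alpha_{\gamma,\lambda}(\Omega)$ to appear in the leading correction. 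In the non-critical range the test function is simply a rescaled truncation and no mass contribution arises.

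For each $\eps>0$, the map $t\mapsto \Phi(tv_\eps)$ vanishes at $t=0$, is positive for small $t$, and tends to $-\infty$ as $t\to\infty$, so it attains its maximum at a unique $t_\eps>0$. As $\eps\to 0$, $t_\eps$ converges to the maximizer of the unperturbed limit problem, and substituting back yields the leading value $\frac{\alpha-s}{2(n-s)}\mur^{\frac{n-s}{\alpha-s}}$ plus correction terms, whose precise identification is the entire content of the proposition.

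Following the technical estimates in \cite{GFSZ}, expand the four integrals in $\Phi(tv_\eps)$. The ratio $\||v_\eps\||^2\big/\bigl(\int_\Omega v_\eps^{\crits}|x|^{-s}\,dx\bigr)^{2/\crits}$ equals $\mur$ plus a mass correction of order $\eps^{\Bp-\Bm}$ in the critical regime, and a strictly smaller correction in the non-critical regime. Using $v_\eps(x)\sim\lambda_\infty\eps^{\Bp-(n-\alpha)/2}|x|^{-\Bp}$ away from the origin, the linear term satisfies $\int_\Omega v_\eps^2\,dx=O(\eps^{\Bp-\Bm})$ in the critical regime. A change of variables $y=x/\eps$ together with the expansion $h(x)=h(0)+O(|x|)$ yields
\[
\int_\Omega h v_\eps^q\,dx=h(0)\,\eps^{\,n-q(n-\alpha)/2}\int_{\R^n}U^q\,dy+o\bigl(\eps^{\,n-q(n-\alpha)/2}\bigr),
\]
valid whenever $\int_{\R^n}U^q<\infty$, which holds throughout the parameter range considered.

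The two correction orders, $\eps^{\,n-q(n-\alpha)/2}$ from the $h$-perturbation and $\eps^{\Bp-\Bm}$ from the mass and linear terms, coincide exactly when $q=q_{crit}$, thanks to the identity $n-q_{crit}(n-\alpha)/2=\Bp-\Bm$. In the non-critical case the mass correction is of strictly higher order than $\eps^{\,n-q(n-\alpha)/2}$, so only the $h(0)$-term survives and (\ref{Formula M-P: Phi(u eps): NoN-critical}) follows. In the critical case, comparing the two orders yields the three sub-cases in (\ref{Formula M-P: Phi(u eps): Critical}) depending on whether $q$ exceeds, equals, or is below $q_{crit}$. The main technical hurdle will be establishing the mass expansion with the explicit positive constants $\tau_j$ and the correct sign, which requires a careful comparison between the $\R^n$-Hardy-Sobolev quotient and its truncated version on $\Omega$, exploiting the fact that $|x|^{-\Bp}$ lies in the kernel of $\LO$ on $\R^n\setminus\{0\}$ (Remark \ref{Remark:fundamental solution of LO on R^n}); this ultimately reduces to a computation already carried out in \cite{GFSZ}.
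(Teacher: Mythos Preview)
Your approach is essentially the one the paper follows: the same test functions from \cite{GFSZ}, the same decomposition of $\Phi(tv_\eps)$ into the quadratic, Hardy--Sobolev and $h$-perturbation pieces, the same comparison of the two scales $\eps^{\,n-q(n-\alpha)/2}$ and $\eps^{\Bp-\Bm}$, and the same identification of $t_\eps$ via the unperturbed limit (the paper quotes the corresponding step from \cite{Jaber}).

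One technical correction: your claim that $\int_{\R^n}U^q<\infty$ ``throughout the parameter range considered'' is not quite true. At infinity this requires $q\Bp>n$, i.e.\ $q>n/\Bp$; a short computation using $\Bp+\Bm=n-\alpha$ shows $n/\Bp<q_{crit}$, so finiteness is guaranteed only when $q\ge q_{crit}$ (and in the non-critical range, where $\Bp\ge n/2$). For $q<q_{crit}$ in the critical regime the integral may diverge; in that case the change of variables gives instead $\int_\Omega h\,v_\eps^q\,dx=O\bigl(\eps^{\,q(\Bp-\Bm)/2}\bigr)=o(\eps^{\Bp-\Bm})$, which is precisely what you need in that sub-case since only the mass term survives. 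So the gap is harmless for the conclusion, but your expansion of $K_\eps$ with a finite coefficient should be restricted to $q\ge q_{crit}$ (and to the non-critical range).
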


\begin{remark}
Throughout this section, we may use the following notations:

$$\bp:= \Bp \quad \text{ and  } \quad \bm:= \Bm$$

\end{remark}

\subsubsection{Test function for non-critical case:}\label{susec:test:non-critical}

\medskip\noindent We fix  cut-off function
%$ a \in C^{0, \tau} (\overline{\Omega}),$ $\tau \in (0,1)$ and 
$\eta \in C_c^{\infty} (\Omega) $ such that
\begin{equation}\label{def:eta}
\eta\equiv 1\hbox{ in }B_\delta(0)\hbox{ and }\eta\equiv 0\hbox{ in }\rn\setminus B_{2\delta}(0)\hbox{ with }B_{4\delta}(0)\subset \Omega.
\end{equation}
Let $U_\alpha \in \h$ be an extremal for $\mu_{\gamma,s,\alpha}(\rn)$. It follows from Theorem \ref{th:asymp:ext} that, up to multipliying by a nonzero constant, $U_\alpha$ satisfies for some $\kappa>0$, 
\begin{equation}\label{eq:U}
(-\Delta)^{\frac{\alpha}{2}}U_\alpha-\frac{\gamma}{|x|^\alpha}U_\alpha=\kappa \frac{U_\alpha^{\crits-1}}{|x|^s}\hbox{ weakly in }\h.
\end{equation}
 Moreover, $U_\alpha \in C^1(\rn\setminus\{0\})$, $U_\alpha>0$ and 
\begin{equation}\label{asymp:U}
\lim_{|x|\to \infty}|x|^{\bp} U_\alpha (x)=1.
\end{equation}

Consider
$$u_\epsilon(x):=\eps^{-\frac{n-\alpha}{2}}U(\eps^{-1}x)\hbox{ for }x\in\rn\setminus\{0\}.$$
It follows from Proposition 3.1 in \cite{GFSZ} that $U_{\eps,\alpha}:= \eta u_{\eps,\alpha}\in \homega.$

\subsubsection{Test function for the critical case:}\label{subsec:test:critical}

The test function in the critical case is more delicate and has more complicated constructure compare to the non-critical case. In order to build a suitable test function, 
% requires the domain $\Omega$ to satisfy a {\it positive mass condition,} defined as follows.  Indeed,  following the work of  \cite{Ghoussoub-Robert 2014, Ghoussoub-Robert-2015} in the local setting, 
we use the fractional Hardy singular interior mass of a domain associted to the operator $\LO$ which was introduced by Ghoussoub-Robert-Shakerian-Zhao \cite{GFSZ}.

We  first summarize the results which will be needed in this section: Assume that $\gamma>\gamma_{crit}(\alpha).$ It follows from Theorem 1.2 in \cite{GFSZ} %\ref{Theorem:TheMass}
 that there exists $H: \Omega\setminus\{0\}\to\rr$ such that 
\begin{equation*}
\left\{\begin{array}{ll}
H\in C^1(\Omega\setminus\{0\})\;,\;\xi H\in \homega &\hbox{ for all }\xi\in C_c^\infty(\rn\setminus\{0\}),\\
(-\Delta)^{\frac{\alpha}{2}}H-\left(\frac{\gamma}{|x|^\alpha}+a)\right)H=0& \hbox{ weakly in }\Omega\setminus\{0\}\\
H>0&\hbox{ in }\Omega\setminus\{0\}\\
H=0&\hbox{ in }\partial\Omega\\
{\rm and} \,\, \lim_{x\to 0}|x|^{\bp}H(x)=1.&
\end{array}\right.
\end{equation*}
Note that the second identity means that for any $\varphi\in C^\infty_c(\Omega\setminus\{0\})$, we have that
\begin{equation}\label{eq:H}
\frac{C_{n,\alpha}}{2}\int_{(\rn)^2}\frac{(H(x)-H(y))(\varphi(x)-\varphi(y))}{|x-y|^{n+\alpha}}\, dxdy-\int_{\rn}\left(\frac{\gamma}{|x|^\alpha}+a\right) H\varphi\, dx=0.
\end{equation}
 Let now $\eta$ be as in \eqref{def:eta}. Following the construction of the singular function $H$ in the proof of Theorem 1.2 in \cite{GFSZ}, we get that there exists $g\in \homega$ such that 
$$H(x) := \frac{\eta(x)}{|x|^{\bp}} + g(x)\hbox{ for }x\in \Omega\setminus\{0\},$$
where $g$ satisfies  
\begin{equation}\label{eq:t}
(-\Delta)^{\frac{\alpha}{2}}g-\left(\frac{\gamma}{|x|^\alpha}+a\right)g=f,
\end{equation}
%with $f\in L^\infty(\Omega)$ and $f\in C^1(B_\delta(0)\setminus\{0\})$. It follows from \eqref{bnd:f:0}(again in the proof of Theorem \ref{Theorem:TheMass}) that there  exists $c>0$ such that 
%\begin{small}
%\begin{align}\label{ppty:f}
%&|f(x)|\leq \frac{C}{|x|^{\bp}} \ \hbox{ for }x\in \Omega\setminus\{0\}  \hbox{ and }  |\nabla f(x)|\leq \frac{C}{|x|^{\bp+1}}\hbox{ for all }x\in B_{\delta/2}(0)\setminus\{0\}.
%\es
%\end{align}
%\end{small}
%We also have that
\begin{equation}\label{def:mass}
g (x)= \frac{m^\alpha_{\gamma,a}(\Omega)}{|x|^{\bm}}+o\left(\frac{1}{|x|^{\bm}}\right)\hbox{ as }x\to 0,\hbox{ and } |g(x)|\leq C|x|^{-\bm}\hbox{ for all }x\in\Omega.
\end{equation}

Here $m^\alpha_{\gamma,a}(\Omega) $ is the fractional Hardy singular interior mass of a domain associted to the operator $\LO.$  We refer the readers to Section 5 of \cite{GFSZ} for the definition and properties of the Mass in detail.

Define the test function as 
$$ \Te(x) = \eta u_\epsilon(x) + \epsilon^{\frac{\bp-\bm}{2}} g(x) \quad\text{ for all }  x \in \overline{\Omega} \setminus \{0\},$$
where 
$$u_{\epsilon,\alpha}(x):=\eps^{-\frac{n-\alpha}{2}}U_\alpha(\eps^{-1}x)\hbox{ for }x\in\rn\setminus\{0\},$$
and $U\in \h$ is such that $U>0$, $U\in C^1(\rn\setminus\{0\})$ and satisfies \eqref{eq:U} above for some $\kappa>0$ and also \eqref{asymp:U}. It is easy to see that $ \Te \in \homega$ for all  $\epsilon >0$.

%\begin{equation}\label{est:B}B(\Te)=\int_{\rn}\frac{U^{\crits}}{|x|^s}\, dx+\crits \eps^{\frac{\bp-\bm}{2}}\theta_\eps+ o\left(\eps^{\bp-\bm}\right)\end{equation}

\begin{remark}\label{Remark:properties of Bp and Bm}
One can summarize the properties of $\Bp$ and $\Bm$ which will be used freely in this section:

\begin{itemize}
\item  $\bm+\bp=n-\alpha$

\item $\bp-\bm>\alpha$ when $\gamma<\gamma_{crit}(\alpha)$
 \item  $\bp-\bm=\alpha$ when $\gamma=\gamma_{crit}(\alpha).$

\item $\bp-\bm < \alpha$ when $\gamma > \gamma_{crit}(\alpha)$ 
\end{itemize}

\end{remark}

\subsubsection{Proof of Proposition \ref{Prop M-P: estimate at the test functions}}\label{Subsection:estimates for non-local case}
We shall use the test functions $U_\eps$(resp., $T_\eps$) constructed in Subsection \ref{susec:test:non-critical}(resp., Subsection \ref{subsec:test:critical}) for  the case when the operator $\LO$ is non-critical (resp., critical) to obtain the general condition of existence. We define the test-functions then as follows:

\begin{equation}\label{Formula M-P: test functions}
v_{\eps,\alpha} =\left\{\begin{array}{rl}
\displaystyle U_{\eps,\alpha}= \eta u_{\eps,\alpha} \,\,\,\,\,\,\,\,\,\,\,\,\,\,\,\,\,\,\,\,\,\,\,\,\,\,\,\,\,\,\,\,\,\,\,\,\,\,\,\,\,\,\,\, & \text{if }  0 \le \gamma \le \gamma_{crit}(\alpha) \vspace{0.1cm}\\
\Te := U_\eps + \epsilon^{\frac{\bp-\bm}{2}} g(x) & \text{if } \gamma_{crit}(\alpha) < \gamma < \gamma_H(\alpha).
\end{array}\right.
\end{equation}

We expand $\Phi(t v_\epsilon)$ in the following way:
$$\Phi(t v_\epsilon) = \frac{t^2}{2} I_\eps - \frac{t^{\crits}}{\crits}J_\eps - \frac{t^q}{q} K_\eps \quad \text{ as } \epsilon \to 0,$$
where 
$$ I_\eps := \|| v_\eps \||^2 - \lambda \|v_\eps\|_{L^2(\Omega)}^2, \ J_\eps:= \io \frac{|v_\eps|^{\crits}}{|x|^s} dx  \text{ and } K_\eps:= \io h |v_\eps|^q dx.$$
We start by recalling the important estimates obtained in \cite{GFSZ}. Indeed, It was proved there that there exist positive constant $c_1,c_2,c_3$ such that 
\begin{small}
\begin{equation}\label{Estimate:I_epsilon}
I_\eps =\left\{\begin{array}{rl}
\displaystyle \kappa \ir \frac{U^{\crits}}{|x|^s} dx- c_1 \lambda \eps^{\alpha}  + o(\eps^{\alpha}) \,\,\,\,\,\,\,\,\,\,\,\,\,\,\,\, \,\,\,\,\,\,\,\,\,\,\,\,\,\,\,\,\,\,\,\,\,\,\,\,\,\,\,\,\,\,\,\,\,\,\,\,\,\,\,\,\,\,\,\,\,\,\,\,\,\,\,\,\,\,\,\,\,\,\,\,\,\,\,\,\,\,\,\,\,\,\,\,\,\,\,\,\,\,\,\,\,\,\,\,\,\,\,\,\,\,\,\,\,\,\,\,\,\,\,& \text{if }  0 \le \gamma < \gamma_{crit}(\alpha) \vspace{0.1cm}\\
\kappa \displaystyle \ir \frac{U^{\crits}}{|x|^s} dx - c_2 \lambda \eps^{\alpha} \ln \eps^{-1} + o(\eps^{\alpha} \ln \eps^{-1})\,\,\,\,\,\,\,\,\,\,\,\,\,\,\,\,\,\,\,\,\,\,\,\,\,\,\,\,\,\,\,\,\,\,\,\,\,\,\,\,\,\,\,\,\,\,\,\,\,\,\,\,\,\,\,\,\,\,\,\,\,\,\,\,\,\,\,\,\,\,\,\,\,\,\,\,\,\,\,\,\,\,\,\,\,\, & \text{if } \gamma = \gamma_{crit}(\alpha) \\
\kappa \displaystyle \ir \frac{U^{\crits}}{|x|^s} dx - c_3 m^\alpha_{\gamma,\lambda} \eps^{\bp-\bm}+2 \kappa \eps^{\frac{\bp-\bm}{2}} \theta_\eps  + o( \eps^{\bp-\bm})   & \text{if } \gamma_{crit}(\alpha) < \gamma < \gamma_H(\alpha),
\end{array}\right.
\end{equation}
\end{small}
as $\eps \to 0,$ and 
\begin{equation}\label{Estimate:J_epsilon}
J_\eps =\left\{\begin{array}{rl}
\displaystyle   \ir \frac{U^{\crits}}{|x|^s} dx + o( \eps^{\bp-\bm}) \,\,\,\,\,\,\,\,\,\,\,\,\,\,\,\,\,\,\,\,\,\,\,\,\,\,\,\,\,\,\,\,\,\,\,\,\,\,\,\,\,\,\,\,\,\,\,\,\,\,\,\,\,\,\, & \text{if }  0 \le \gamma \le \gamma_{crit}(\alpha) \vspace{0.1cm}\\
\displaystyle \ir \frac{U^{\crits}}{|x|^s} dx+ \crits  \eps^{\frac{\bp-\bm}{2}} \theta_\eps + o( \eps^{\bp-\bm})   & \text{if } \gamma_{crit}(\alpha) < \gamma < \gamma_H(\alpha),
\end{array}\right.
\end{equation}
as $\eps \to 0.$  Here,  $\theta_\eps:=\int_{\rn}\frac{\ue^{\crits-1}\eta g}{|x|^s}\, dx$ and we have $ \lim\limits_{\epsilon \to 0}\theta_\eps =0;$ see (57) in  \cite{GFSZ}. We are then left with estimating $K_\eps.$\\

\smallskip\noindent \textbf{Estimate for $K_\eps:$} We will consider two following cases.\\

\textit{Case 1:} $0 \le \gamma \le \gamma_{crit}(\alpha).$  We split  $K_\eps$ into two integrals  as follows

\begin{equation*}\label{Formula M-P: Split K eps into two integrals}
  K_\eps = \io h |U_\eps|^q dx = \int_{B_\delta} h |U_\eps|^q dx + \int_{\Omega \setminus B_\delta} h |U_\eps|^q dx. 
\end{equation*}
We start by estimating the first term:  
\begin{align*}
\int_{B_\delta} h |U_\eps|^q dx &= \eps^{q\frac{\alpha-n}{2}} \int_{B_\delta} h(x) |U(\frac{x}{\eps})|^q dx =  \eps^{n - q\frac{n - \alpha}{2}} \int_{B_{\frac{\delta}{\eps}}} h(\eps X) |U(X)|^q dX\\
& =  \eps^{n - q\frac{n- \alpha}{2}} h(0) \int_{\R^n}  |U(X)|^q dX  + \eps^{n - q\frac{n - \alpha}{2}} \int_{\R^n \setminus B_{\frac{\delta}{\eps}}} h(\eps X) |U(X)|^q dX.
\end{align*}
Note that we used the change of variable $x = \eps X$ . From the asymptotic (\ref{asymp:U}) and the fact that $q > 2$, it then follows that  
\begin{align*}
\int_{B_\delta} h |U_\eps|^q dx &= \eps^{n - q\frac{n- \alpha}{2}} h(0) \int_{\R^n}  |U(X)|^q dX + O(\eps^{q\frac{\bp-\bm}{2}})\\
& = \eps^{n - q\frac{n- \alpha}{2}} h(0) \int_{\R^n}  |U(X)|^q dX + o(\eps^{\bp-\bm}). 
\end{align*}
Following the same argument that we treat the second integral in the last term yields that
$$\int_{\Omega \setminus B_\delta} h |U_\eps|^q dx = O(\eps^{q\frac{\bp-\bm}{2}})=o(\eps^{\bp-\bm}).$$
Therefore, 
\begin{align}\label{Formula M-P: Estimate for h U-eps^q : non-critical}
\int_{\Omega} h |U_\eps|^q dx &= \eps^{n - q\frac{n- \alpha}{2}} h(0) \left[ \int_{\R^n}  |U(X)|^q dX  \right]+ o(\eps^{\bp-\bm}). 
\end{align}
It now follows from Remark \ref{Remark:properties of Bp and Bm} that  
$$\bp-\bm  \ge \alpha \quad  \text{ if } 0\le\gamma\le\gamma_{crit}(\alpha).$$
On the other hand, the condition $2 < q < 2^*_\alpha$ implies that 
$$ 0 < n - q\frac{n - \alpha}{2} < \alpha.$$ 
Combining the last two inequalities, we then get that  $$ n - q \frac{n - \alpha}{2} < \bp-\bm,$$
and therefore 
%$$ o(\eps^{\bp-\bm}) + o(\eps^{n - q\frac{n - \alpha}{2}}) = o(\eps^{n - q\frac{n - \alpha}{2}}) \quad \text{ if } 0 \le \gamma \le \gamma_{crit}(\alpha).$$
%when $0 \le \gamma \le \gamma_{crit}(\alpha).$  We finally obtain 
\begin{align}\label{Formula M-P: Estimate for K_eps : non-critical}
K_\eps = \eps^{n - q\frac{n- \alpha}{2}} h(0) \left[ \int_{\R^n}  |U(X)|^q dX  \right]+ o(\eps^{n - q\frac{n - \alpha}{2}}), 
\end{align}
when $0 \le \gamma \le \gamma_{crit}(\alpha).$\\

%\begin{equation}\label{Formula M-P: Estimate for K_eps : non-critical}
%K_\eps =\left\{\begin{array}{rl}
%\displaystyle \eps^{n - q\frac{n - \alpha}{2}} h(0) \left( \int_{\R^n}  |U(X)|^q dX \right) + o(\eps^{n - q\frac{n - \alpha}{2}} )  & \text{if }  q > q^*_1 \vspace{0.1cm}\\
%w_{n-1} h(0) \eps^{n - q^*_1 \frac{n - \alpha}{2}} + o(\eps^{n - q^*_1 \frac{n - \alpha}{2}}  \ln(\eps^{-1}) ) \,\,\,\,\,\,\,\,\,\,\,\,\,\, & \text{if } q= q^*_1:= \frac{n}{\bp} \\
%O(\eps^{q\frac{\bp-\bm}{2}}) \,\,\,\,\,\,\,\,\,\,\,\,\,\,\,\,\,\,\,\,\,\,\,\,\,\,\,\,\,\,\,\,\,\,\,\,\,\,\,\,\,\,\,\,\,\,\,\,\,\,\,\,\,\,\,\,\,\,\,\,\,\,\,\,\,\,\,\,\,\,\,\,\,\,\,\,\,\,\,\,\,\,\,\,\,\,\,\,\,\,& \text{if } q <  q^*_1,
%\end{array}\right.
%\end{equation}

\textit{Case 2:} $\gamma_{crit}(\alpha) < \gamma < \gamma_H(\alpha).$
In order to estimate $K_\eps$ in the critical case, we need the following inequality: For $q > 2,$ there exists $C=C(q)>0$ such that 
$$ ||X+Y|^q - |X|^q| - qXY|
X|^{q-2} | \le C (|X|^{q-2} Y^2 + |Y|^q) \quad \text{ for all } X,Y\in \R.  $$
We write
$$  K_\eps = \int_{\Omega} h |T_\eps|^q dx =\int_{B_\delta} h |U_\eps + \epsilon^{\frac{\bp-\bm}{2}} g(x) |^q dx + O(\eps^{q\frac{\bp-\bm}{2}}),$$
where the last term came from the fact that 
$$ \int_{\Omega \setminus B_\delta} h |T_\eps  |^q dx = O(\eps^{q\frac{\bp-\bm}{2}})=o(\eps^{\bp-\bm}).$$
Let now $X =  U_\eps$ and  $Y =  \epsilon^{\frac{\bp-\bm}{2}} g(x) $ in the above inequality. Taking integral from both sides then leads us to 
\begin{align*}
\int_{B_\delta} h |U_\eps + \epsilon^{\frac{\bp-\bm}{2}} g(x) |^q dx =  \int_{B_\delta} h |U_\eps|^q dx + q \epsilon^{\frac{\bp-\bm}{2}}  \int_{B_\delta} h g |U_\eps|^{q-1} dx + R_\eps,
\end{align*}
where 
\begin{align}\label{Formula M-P: Estimate for R_eps}
\bs
R_\eps &= O\left( \epsilon^{\bp-\bm} \int_{B_\delta} h  |U_\eps|^{q-2}  g^2 dx +   \epsilon^{q\frac{\bp-\bm}{2}} \int_{B_\delta}  h g^q dx\right)\\
&= O( \eps^{q\frac{\bp-\bm}{2}}) =  o(\epsilon^{\bp-\bm}).
%O(\eps^{n+2\bp - \frac{n-\alpha}{2}q} )+ O( \eps^{q\frac{\bp-\bm}{2}})
\es
\end{align}
%& = \left\{\begin{array}{rl}
%\displaystyle O(\eps^{n+2 \bp - q\frac{n - \alpha}{2}}) \,\,\,\,\,\,\,\,\,\,  & \text{if }  q > q^*_2 \vspace{0.1cm}\\
%O(\eps^{q^*_2 \frac{\bp-\bm}{2}}   \ln(\eps^{-1}) ) & \text{if } q= q^*_2:= \frac{n}{\bp}+2 \\
%O(\eps^{q\frac{\bp-\bm}{2}})\,\,\,\,\,\,\,\,\, \,\,\,\,\,\,\,\,\,\,\,\,\,\,  & \text{if } q <  q^*_2.
%\end{array}\right. 
Regarding the second term, we have 
\begin{align}\label{Formula M-P: Estimate for term with power (q-1)}
\bs
q \epsilon^{\frac{\bp-\bm}{2}}  \int_{B_\delta} h g |U_\eps|^{q-1} dx& %O(\epsilon^{\frac{\bp-\bm}{2}}  \int_{B_\delta}   u_\eps^{q-1} \eta g \ dx)
= O( \eps^{(\bp - \bm) + (n - q\frac{n - \alpha}{2})}) +O( \eps^{q\frac{\bp-\bm}{2}})\\
 & = o( \eps^{\bp-\bm})  \quad \qquad \text{ for all } q \in (2, 2^*_\alpha).
%& = \left\{\begin{array}{rl}
%\displaystyle O(\eps^{n+\bp - q\frac{n - \alpha}{2}}) \,\,\,\,\,\,\,\,\, \,\,\,\,  & \text{if }  q > q^*_3 \vspace{0.1cm}\\
%O(\eps^{q^*_3\frac{\bp-\bm}{2}}   \ln(\eps^{-1}) ) & \text{if } q= q^*_3:= \frac{n}{\bp}+1 \\
%O(\eps^{q\frac{\bp-\bm}{2}})\,\,\,\,\,\,\,\,\, \,\,\,\,\,\,\,\,\, \,\,\,\,\, & \text{if } q <  q^*_3.
%\end{array}\right. 
\es
\end{align}
Combining (\ref{Formula M-P: Estimate for h U-eps^q : non-critical}), (\ref{Formula M-P: Estimate for R_eps}) and (\ref{Formula M-P: Estimate for term with power (q-1)}), we get that there exist a constant $C>$ $0$ such that  
$$K_\eps = C h(0) \eps^{n - q\frac{n - \alpha}{2}} + o(\eps^{n- q\frac{n - \alpha}{2}}) + o(\epsilon^{\bp-\bm}) \quad \text{ if } \gamma_{crit}(\alpha) < \gamma < \gamma_H(\alpha).$$
We point out that the situation in the critical case is more delicate, and unlike the non-critical case, we have that both
% both  $\bp-\bm$ and $n - q\frac{n - \alpha}{2} < \alpha$  we have 
$$\bp-\bm \text{ and }  n - q\frac{n - \alpha}{2} \text{ are in the interval } (0,\alpha).$$
Therefore, there is a competition between the terms  $ \eps^{\bp-\bm}$ and  $\eps^{n - q\frac{n - \alpha}{2}}.$ In order to find the threshold, namely $q_{crit},$ we equate the exponents of  the $\eps$ terms, and   solve the equation for $q$ to get that 
%\begin{equation}\label{Formula M-P: equation for q-crit}
%n - q\frac{n - \alpha}{2} = \bp-\bm
%\end{equation}
$$q_{crit} = 2^*_\alpha - 2 \frac{\bp-\bm}{n- \alpha} .$$ 
%satisfies (\ref{Formula M-P: equation for q-crit}).
One should note that $q_{crit}  \in  (2,2^*_\alpha),$ since $\alpha > \bp-\bm >0$ in the critical case. This implies that 
\begin{align*}
\bs
\left\{\begin{array}{rl}
\displaystyle   o(\eps^{n- q\frac{n - \alpha}{2}}) + o(\eps^{\bp-\bm}) = o(\eps^{n- q\frac{n - \alpha}{2}})\,\,\,\,\,\,\,\,\,\,\,\,\,\,\,\,\,\,\,\,\,\,\,\,\,\,\,\,\,\,\,\,\,\,\,\,\,\,\,\,\,\,\,\,\,\,\,\,\, & \text{if }  q > q_{crit} \vspace{0.1cm}\\
o(\eps^{n- q\frac{n - \alpha}{2}}) + o(\eps^{\bp-\bm}) = o(\eps^{n- q\frac{n - \alpha}{2}})= o(\eps^{\bp-\bm})\, & \text{if } q= q_{crit} \\  o(\eps^{n- q\frac{n - \alpha}{2}}) + o(\eps^{\bp-\bm})= o(\eps^{\bp-\bm}) \,\,\,\,\,\,\,\,\,\,\,\,\,\,\,\,\,\,\,\,\,\,\,\,\,\,\,\,\,\,\,\,\,\,\,\,\,\,& \text{if } q <  q_{crit}.
\end{array}\right. 
\es
\end{align*}
We finally obtain 
\begin{align}\label{Formula M-P: Estimate for K_eps : citical }
\bs
K_\eps = \left\{\begin{array}{rl}
\displaystyle c_4 h(0) \eps^{n - q\frac{n - \alpha}{2}} + o(\eps^{n- q\frac{n - \alpha}{2}}) \,\,\,\,\,\,\,\,\,\,\,\,\,\,\,\,\,\,\,\,\,\,\,\,\,\,\,  & \text{if }  q > q_{crit} \vspace{0.1cm}\\
c_5 h(0) \eps^{\bp-\bm} + o(\eps^{\bp-\bm}) \,\,\,\,\,\, & \text{if } q= q_{crit}\\
o(\eps^{\bp-\bm}) \,\,\,\,\,\,\,\,\,\,\,\,\,\,\,\,\,\,\,\,\,\,\,\,\,\,\,\,\,\,\,\,\,\,\,\,\,\,\,\,\,\,\,\,\,\,\,\,\,\,\,\,\,\,\,\,\,\,\,\,\,\, & \text{if } q <  q_{crit}.
\end{array}\right. 
\es
\end{align}
for some $c_4,c_5>0,$ and as long as $\gamma_{crit}(\alpha) < \gamma < \gamma_H(\alpha).$\\

\smallskip\noindent We now define 
$$ I_0 := \lim\limits_{\eps \to 0} I_\eps  = \kappa \ir \frac{U^{\crits}}{|x|^s} dx \ \text{ and } \ J_0 := \lim\limits_{\eps \to 0} J_\eps  = \ir \frac{U^{\crits}}{|x|^s} dx, $$ and it is easy to check that  $$\lim\limits_{\eps \to 0} K_\eps = 0  \quad \text{ for all cases}.$$ 

\smallskip\noindent In the next step, we claim that, up to a subsequence of $(u_\eps)_{\eps >0 },$ there exists $T_0:= T_0(n,s,\alpha) >0$ such that 
\begin{equation}\label{Formula M-P: sup Phi in terms of Psi and T0}
\sup\limits_{t \ge 0} \Phi(t v_\eps) = d (\Psi (v_\eps) )^{\frac{\crits}{\crits-2}} - \frac{T_0^q}{q} K_\eps + o(K_\eps),
\end{equation} 
where 
$$ \Psi(v_\eps) = \frac{\|| v_\eps \||^2 - \lambda \|v_\eps\|^2_{L^2(\Omega)}}{\left(\io \frac{|v_\eps|^{\crits}}{|x|^s} dx \right)^{\frac{2}{\crits}} }=\frac{I_\eps}{J_\eps^{\frac{2}{\crits}}}.$$
The proof of this claim goes exactly as \textbf{Step II} in  \cite[Proposition 3]{Jaber}. We omit it here.\\
Let us now compute $ \Psi(v_\eps).$ It follows from  (\ref{Estimate:I_epsilon}) and (\ref{Estimate:J_epsilon}) that there exist positive constants $c_6,c_7,c_8$ such that  
\begin{align}\label{Formula M-P: Psi(u eps)}
\bs
 \Psi(v_\eps)  =  \mur  & \Bigg( 1 + \left\{\begin{array}{rl}
\displaystyle  -c_6 \lambda \eps^{\alpha} + o(\eps^{\alpha})\,\,\,\,\,\,\,\,\,\,\,\,\,\,\,\,\,\,\,\,\,\,\,\,\,\,\,\,\,\,\,\,\,\,\,\,\,\,\,\,\,\,\,\,\,\,\,\,\,\,\, \,\,\,\,\,\,\,\,\,\,\,\,   & \text{if }  0 \le \gamma < \gamma_{crit}(\alpha) \vspace{0.1cm}\\
-c_7\lambda \eps^{\alpha} \ln \eps^{-1} + o(\eps^{\alpha} \ln \eps^{-1})\,\,\,\,\,\,\,\,\,\,\,\,\,\,\,\, \,\,\,\,\,\,\,\,\,\,\,\, \,\,\,\, & \text{if } \gamma = \gamma_{crit}(\alpha) \\
- c_8 m^\alpha_{\gamma,\lambda} \eps^{\bp-\bm} + o(\eps^{\bp-\bm})   & \text{if }  \gamma_{crit}(\alpha)<\gamma  <\gamma_H(\alpha)
\end{array}\right . \Bigg).
 \es
 \end{align}
 %where
% \begin{align*}
%\bs
% \Theta_{\eps,\gamma} = \left\{\begin{array}{rl}
%\displaystyle  -c_6 \lambda \eps^{\alpha} + o(\eps^{\alpha})\,\,\,\,\,\,\,\,\,\,\,\,\,\,\,\,\,\,\,\,\,\,\,\,\,\,\,\,\,\,\,\,\,\,\,   & \text{if }  0 \le \gamma < \gamma_{crit}(\alpha) \vspace{0.1cm}\\
%-c_7\lambda \eps^{\alpha} \ln \eps^{-1} + o(\eps^{\alpha} \ln \eps^{-1})\,\,\,\, & \text{if } \gamma = \gamma_{crit}(\alpha) \\
%- c_8 m^\alpha_{\gamma,\lambda} \eps^{\bp-\bm} + o(\eps^{\bp-\bm})   & \text{if }  \gamma_{crit}(\alpha)<\gamma  <\gamma_H(\alpha).
%\end{array}\right . 
%\es
%\end{align*}
We are now going  to estimate $\sup\limits_{t \ge 0}\Phi(v_\eps).$ This will be done again by considering two cases: \\

\textit{Case 1:} $0 \le \gamma \le \gamma_{crit}(\alpha).$ In this case, plugging  (\ref{Formula M-P: Estimate for K_eps : non-critical}) and (\ref{Formula M-P: Psi(u eps)}) into (\ref{Formula M-P: sup Phi in terms of Psi and T0}) implies that there exist constants $c_9,c_{10},c_{11},c_{12}>0$ such that  
\begin{align*}
\sup\limits_{t \ge 0}\Phi(v_\eps) &= \frac{\alpha-s}{2(n-s)}  \mur^{\frac{n-s}{\alpha-s}} - c_9 \lambda \eps^\alpha  - c_{10}  h(0) \eps^{n - q\frac{n - \alpha}{2}} + o(\eps^\alpha)+ o(\eps^{n- q\frac{n - \alpha}{2}}), 
\end{align*}
when  $0 \le \gamma < \gamma_{crit}(\alpha),$ and 

\begin{align*}
\sup\limits_{t \ge 0}\Phi(v_\eps) &= \frac{\alpha-s}{2(n-s)}  \mur^{\frac{n-s}{\alpha-s}} - c_{11} \lambda \eps^\alpha \ln(\eps^{-1})  - c_{12}  h(0) \eps^{n - q\frac{n - \alpha}{2}} + o(\eps^\alpha \ln(\eps^{-1}))+ o(\eps^{n- q\frac{n - \alpha}{2}}),
\end{align*}
when $ \gamma =\gamma_{crit}(\alpha).$\\
 Recall that $\alpha > n - q\frac{n - \alpha}{2},$ since $q>2.$ This implies that

$$o(\eps^\alpha) + o(\eps^{n- q\frac{n - \alpha}{2}}) = o(\eps^{n- q\frac{n - \alpha}{2}}).  $$  
Thus, there exist a positive constant $\tau_1$  such that, for every $ 0 \le  \gamma  \le \gamma_{crit}(\alpha),$ we have  
\begin{equation*}
\sup\limits_{t \ge 0}\Phi(v_\eps) = \frac{\alpha-s}{2(n-s)}  \mur^{\frac{n-s}{\alpha-s}}  - \tau_1  h(0) \eps^{n - q\frac{n - \alpha}{2}} + o(\eps^{n- q\frac{n - \alpha}{2}}), 
\end{equation*}
%where  $\Upsilon:= \frac{\alpha-s}{2(n-s)}  \mur^{\frac{n-s}{\alpha-s}}.$\\
 
\textit{Case 2:} $ \gamma_{crit}(\alpha) < \gamma  < \gamma_H(\alpha).$ The critical case needs a careful analysis as  a new phenomena happens in this situation. We shall show that  there is a competition between the geometry of the domain, \textit{the mass} (i.e., $ \eps^{\bp-\bm} m^\alpha_{\gamma,\lambda}),$ and the non-linear perturbation (i.e.,  $\eps^{n - q\frac{n - \alpha}{2}}  h(0) ).$ Indeed, it follows  from plugging  (\ref{Formula M-P: Estimate for K_eps : citical }) and (\ref{Formula M-P: Psi(u eps)}) into (\ref{Formula M-P: sup Phi in terms of Psi and T0}) that there exist constants $c_{12},c_{13},c_{14}>0 $ such that   
% In order to find the threshold, we equate the exponents of  the $\eps$ terms, and   solve the equation for $q$ to obtain  
%\begin{equation}\label{Formula M-P: equation for q-crit}
%n - q\frac{n - \alpha}{2} = \bp-\bm
%\end{equation}
%$$q_{crit} = 2^*_\alpha - 2 \frac{\bp-\bm}{n- \alpha} .$$ 
%satisfies (\ref{Formula M-P: equation for q-crit}).
%One should note that $q_{crit}  \in  (2,2^*_\alpha)$ and $q_{crit} > q^*_1,$ since $\alpha > \bp-\bm >0$ in the critical case.\\
%Therefore, we have 
\begin{align*}
\bs
\sup\limits_{t \ge 0}\Phi(t w_\eps) = \Upsilon & - c_{12} m^\alpha_{\gamma,\lambda} \eps^{\bp-\bm} + o(\eps^{\bp-\bm})\\
& \quad  + \left\{\begin{array}{rl}
\displaystyle   - c_{13}  h(0) \eps^{n - q\frac{n - \alpha}{2}} + o(\eps^{n- q\frac{n - \alpha}{2}})\,\,\,\,\,\,\,\,\,\,\,\,\,\,\,\,\,\,\,\,\,\,\,& \text{if }  q > q_{crit} \vspace{0.1cm}\\
- c_{14}  h(0) \eps^{\bp-\bm} + o(\eps^{\bp-\bm}) & \text{if } q= q_{crit} \\  o(\eps^{\bp-\bm}) \,\,\,\,\,\,\,\,\,\,\,\,\,\,\,\,\,\,\,\,\,\,\,\,\,\,\,\,\,\,\,\,\,\,\,\,\,\,\,\,\,\,\,\,\,\,\,\,\,\,\,\,\,\,\,\,\,\,\,\,\,\,\,\,& \text{if } q <  q_{crit}.
\end{array}\right. 
\es
\end{align*}
Following our analysis in the critical case of estimating $K_\eps,$ one can then summarize the competition results as follows.
\begin{center}

\begin{tabular}{ | m{3.3cm} | m{1.6 cm}| m{3.4cm} | m{1.6cm}|  m{0.7cm}|}
\hline
 Competitive Terms & $q > q_{crit}$ & \qquad $q = q_{crit}$ & $q < q_{crit}$.\\
\hline
\ \quad  $\eps^{n - q\frac{n - \alpha}{2}}  h(0)$ & Dominate &  Equally Dominate & \ \quad $\times$ \\ 
\hline
\ \quad $\eps^{\bp-\bm} m^\alpha_{\gamma,\lambda}$& \ \quad $\times$ &  Equally Dominate & Dominate \\ 
\hline
\end{tabular}
\end{center}
Therefore, we finally deduce that there exists $\tau_l >0 $ for $l = 2,..,5$ such that 
\begin{align*}
\bs
\sup\limits_{t \ge 0}\Phi(t w_\eps) &= \frac{\alpha-s}{2(n-s)}  \mur^{\frac{n-s}{\alpha-s}} \\ 
& + \left\{\begin{array}{rl}
\displaystyle   - \tau_2  h(0) \eps^{n - q\frac{n - \alpha}{2}} + o(\eps^{n- q\frac{n - \alpha}{2}}) \,\,\,\,\,\,\,\,\,\,\,\,\,\,\,\,\,\,\,\,\,\,\,\,\,\,\,\,\,\, \,\,\,\,\, \,\,\,\,\, \,\,\,\,\, \,\,\,\,\, \,\, & \text{if }  q > q_{crit} \vspace{0.1cm}\\
- (\tau_3  h(0) + \tau_4 m^\alpha_{\gamma,\lambda}) \eps^{\bp-\bm} + o(\eps^{\bp-\bm})  & \text{if } q= q_{crit} \\ - \tau_5  m^\alpha_{\gamma,\lambda} \eps^{\bp-\bm} + o(\eps^{\bp-\bm})\,\,\,\,\,\,\,\,\,\,\,\,\,\,\,\,\,\,\,\,\,\,\,\,\,\,\,\,\, & \text{if } q <  q_{crit}.
\end{array}\right. 
\es
\end{align*}
This complete the proof of Proposition \ref{Prop M-P: estimate at the test functions}.

\subsection{Proof of Theorem \ref{Thm M-P: Main result }: The non-local case  }\label{Section: proof of main Thm M-P } 

Assume that $0 < \alpha <2.$ Theorem \ref{Thm M-P: Main result } is now a direct consequence of Theorem \ref{Thm M-P: main condition of existence} and Proposition \ref{Prop M-P: estimate at the test functions}.

%\begin{remark}\label{Remark: result for alpha=2}
%We point out that  Theorem \ref{Thm M-P: main condition of existence} and  Proposition \ref{Prop M-P: estimate at the test functions} hold when $\alpha =2.$ One can use the results in XXX and follow the 
%\end{remark}

\section{ The local case}\label{Sec:local case}
\medskip\noindent In this section, our aim is to prove the existence of positive solutions for (\ref{Main problem: M-P solutions-Local}), and show that Theorem \ref{Thm M-P: Main result } still holds true when $\alpha=2.$ To this end, we use the results by Ghoussoub-Robert \cite{Ghoussoub-Robert-2015}  in the local setting (i.e., $\alpha =2$), and follow the proofs of Theorem \ref{Thm M-P: main condition of existence} and  Proposition \ref{Prop M-P: estimate at the test functions} to obtain the general existence condition (i.e., condition (\ref{Condition:General condition})  when $\alpha =2$)  and the desired estimates corresponding to (\ref{Formula M-P: Phi(u eps): NoN-critical}) and (\ref{Formula M-P: Phi(u eps): Critical}).
 %for appropriate test functions [XXX].\\

We consider the case when $\alpha =2,$ that is when the operator $\fo$ boils down to the well-known Laplacian  operator $- \Delta.$ Problem (\ref{Main problem: M-P solutions})  can therefore be written as 

\begin{equation}\label{Main problem: M-P solutions-Local:Section}
\left\{\begin{array}{rl}
\displaystyle {-}{ \Delta} u- \gamma \frac{u}{|x|^2} - \lambda u= {\frac{u^{\critsl-1}}{|x|^s}}+ h(x) u^{q-1} & \text{in }  {\Omega}\vspace{0.1cm}\\
u\geq 0  \,\,\,\,\,\,\,\,\,\,\,\,\,\,\,\,\,\,\,\,\,\,\,\,\,\,\,\,\,\,\,\,\,\,\,\,\,\,\,\,\,\,\,  & \text{in } \Omega ,\\
 u=0 \,\,\,\,\,\,\,\,\,\,\,\,\,\,\,\,\,\,\,\,\,\,\,\,\,\,\,\,\,\,\,\,\,\,\,\,\,\,\,\,\,\,  & \text{in } \partial \Omega,
\end{array}\right.
\end{equation}

where  $s \in [0,2),$ $ 2_2^*(s):=\frac{2(n-s)}{n-2},$ $ \gamma < \gamma_H(2)= \frac{(n-2)^2}{4},$ and  $u$ belongs to the space $H_0^{1}(\Omega)$ which is the completion of $C_c^\infty(\Omega)$ with respect to the norm $$ \|u\|_{H_0^{1}(\Omega)}^2 = \int_{\Omega} |\nabla u|^2dx.$$
Moreover, the norm  
$$\|| u\||_2 = \left[ \int_{\Omega} \Big(  |\nabla u|^2dx - \gamma \frac{u^2}{|x|^2} \Big) \ dx \right]^{\frac{1}{2}} $$
is well-defined on $H_0^{1}(\Omega),$ and is equivalent to $\| \ . 
 \ \|_{H_0^{1}(\Omega)}.$ The corresponding functional to problem (\ref{Main problem: M-P solutions-Local})  is  
 \begin{equation*}
\Phi_2(u)= \frac{1}{2} \|| u\||_2^2 -\frac{\lambda}{2} \io u^2 dx  -\frac{1}{2_{2}^*(s)}\int_{\Omega} \frac{u_+^{2_{2}^*(s)}}{|x|^{s}} dx -\frac{1}{q} \int_{\Omega} h u_+^q dx.
\end{equation*}

We point out that the results proved in Sections 
$2, 3$ and $4$ in \cite{Ghoussoub-Robert-2015}  provide us the desired tools to show that Theorem \ref{Thm M-P: main condition of existence} and  Proposition \ref{Prop M-P: estimate at the test functions} still hold in the local case (i.e., when $\alpha=2$). One can indeed use these results, and follow the proofs of Theorem \ref{Thm M-P: main condition of existence} and  Proposition \ref{Prop M-P: estimate at the test functions} to establish the following.

\begin{theorem}\label{Thm M-P: main condition of existence:Local}
Let $\Omega$ be a smooth bounded domain in $\R^n (n > 2)$ such that $0 \in \Omega,$ and let $\critsl:= \frac{2(n-s)}{n-2}, $  $0 \le  s < 2,$ $-\infty < \lambda < \lambda_1(\LOl),$ and $ \gamma < \gamma_H(2)= \frac{(n-2)^2}{4}.$ We consider $2 < q <2^*_2,$ $h \in C^0(\overline{\Omega})$ and $h \ge0.$ . We also assume that there exists $w \in \homegal$, $w \not \equiv 0$ and $w \ge 0 $ such that 
\begin{equation}\label{Formula M-P: general cond. for existence}
\sup\limits_{t \ge 0} \Phi_2(t w) <  \frac{2-s}{2(n-s)} \murl^{\frac{n-s}{2-s}}.  
\end{equation} 
Then, problem (\ref{Main problem: M-P solutions-Local}) has a non-negative  solution in $\homegal$.
\end{theorem}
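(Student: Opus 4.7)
The plan is to mirror, step for step, the three-part proof of Theorem \ref{Thm M-P: main condition of existence} that was carried out in the fractional setting, replacing $H_0^{\alpha/2}(\Omega)$ by $H_0^1(\Omega)$, $\||\cdot\||$ by $\||\cdot\||_2$, and the Gagliardo-norm computations by the classical gradient ones. Since the tools needed locally (the improved Hardy-Sobolev inequality, the equality $\mu_{\gamma,s,2}(\Omega)=\mu_{\gamma,s,2}(\R^n)$, the eigenvalue $\lambda_1(\LOl)$, and the Brezis-Lieb type splitting) are already available from Ghoussoub-Robert \cite{Ghoussoub-Robert-2015}, the proof is essentially a translation.

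First I would establish the local analogue of Proposition \ref{Prop M-P: existence of strong limit for all P-S sequences}: if $(u_k)$ is a Palais-Smale sequence for $\Phi_2$ at a level $c < \frac{2-s}{2(n-s)}\murl^{\frac{n-s}{2-s}}$, then $(u_k)$ admits a strongly convergent subsequence in $\homegal$. Boundedness follows by combining $\Phi_2(u_k)\to c$ with $\langle \Phi_2'(u_k),u_k\rangle=o(\||u_k\||_2)$, using $q>2$ to absorb the subcritical nonlinearity and the two-sided bound coming from $\lambda<\lambda_1(\LOl)$ and $\gamma<\gamma_H(2)$ (equivalence of norms plays the same role here as \eqref{comparable norms}). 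Writing $u_k\rightharpoonup u$ weakly in $\homegal$, strongly in $L^{p_1}(\Omega)$ for $p_1\in[2,2^*_2)$ and in $L^{p_2}(\Omega,|x|^{-s}dx)$ for $p_2\in[2,2^*_2(s))$, the Brezis-Lieb decomposition gives
\[
\||u_k-u\||_2^2 = \io \frac{(u_k-u)_+^{2^*_2(s)}}{|x|^s}\,dx + o(1),\qquad \frac{2-s}{2(n-s)}\||u_k-u\||_2^2 \le c+o(1).
\]
Plugging these into $\mu_{\gamma,s,2}(\Omega)(\int_\Omega|u_k-u|^{2^*_2(s)}/|x|^s\,dx)^{2/2^*_2(s)}\le \||u_k-u\||_2^2$ and using $c < \frac{2-s}{2(n-s)}\murl^{\frac{n-s}{2-s}}$ forces $\||u_k-u\||_2\to 0$, whence $u_k\to u$ strongly and $u$ is a weak solution of \eqref{Main problem: M-P solutions-Local}.

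Next I would verify that $\Phi_2$ satisfies the mountain pass geometry of Lemma \ref{Theorem MPT- Ambrosetti-Rabinowitz version}. Clearly $\Phi_2\in C^1(\homegal)$ with $\Phi_2(0)=0$. Writing
\[
\Phi_2(w) \ge \||w\||_2^2\Bigl(\tfrac{1}{2}\bigl(1-\tfrac{\lambda}{\lambda_1(\LOl)}\bigr)-\tfrac{1}{q}S^{-q/2}\||w\||_2^{q-2}-\tfrac{1}{2^*_2(s)}\murl^{-2^*_2(s)/2}\||w\||_2^{2^*_2(s)-2}\Bigr)
\]
and exploiting $q-2>0$, $2^*_2(s)-2>0$, and $\lambda<\lambda_1(\LOl)$, one finds $R,\rho>0$ with $\Phi_2(u)\ge\rho$ whenever $\|u\|_{\homegal}=R$. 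For a non-trivial non-negative $w$, $\Phi_2(tw)\to-\infty$ because the $t^{2^*_2(s)}$ term dominates, so a $t_w>0$ with $\|t_ww\|_{\homegal}>R$ and $\Phi_2(t_ww)<0$ exists. The Ambrosetti-Rabinowitz lemma then yields a Palais-Smale sequence at a level
\[
0<\rho\le c_w \le \sup_{t\ge 0}\Phi_2(tw) < \frac{2-s}{2(n-s)}\murl^{\frac{n-s}{2-s}},
\]
by our hypothesis.

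Combining these two steps gives $u\in\homegal$ with $\Phi_2'(u)=0$, hence a weak solution. Finally, testing the equation against $u_-\in\homegal$ and arguing as in \cite[Lemma 5.2]{GFSZ} (which in the local case simplifies since $\nabla u_+\cdot\nabla u_-=0$ a.e.\ and the Hardy inequality on $u_-$ is direct) yields $\||u_-\||_2=0$, so $u\ge 0$ on $\Omega$. I expect the main technical point to be the Brezis-Lieb splitting together with verifying $\int_\Omega h(u_k-u)(( u_k)_+^{q-1}-u_+^{q-1})\,dx=o(1)$, which relies on the compactness of the subcritical embedding $\homegal\hookrightarrow L^q(\Omega)$ with $h\in C^0(\overline{\Omega})$; the rest is a direct transcription of the non-local argument.
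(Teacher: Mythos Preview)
Your proposal is correct and follows exactly the approach indicated in the paper: the paper itself gives no detailed proof for Theorem \ref{Thm M-P: main condition of existence:Local}, merely stating that one should follow the proof of Theorem \ref{Thm M-P: main condition of existence} using the local tools from \cite{Ghoussoub-Robert-2015}. Your three-part outline (Palais--Smale compactness below the threshold, mountain pass geometry, and positivity via testing against $u_-$) is precisely that transcription, and in fact spells out more of the mechanics than the paper does.
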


\begin{remark}
Recall that the best constant in the Hardy-Sobolev inequalities is defined as 

$$\murl = \inf\limits_{u \in H^1_0(\R^n) \setminus \{0\}}\frac{\int_{\R^n} (  |\nabla u|^2dx - \gamma \frac{u^2}{|x|^2} ) \ dx}{\left(  \int_{\R^n} \frac{u^{2_{2}^*(s)}}{|x|^{s}} dx   \right)^{\frac{2}{2^*(s)}}},$$ 
where $H^1_0(\R^n)$  is the completion of $C_c^\infty(\R^n)$ with respect to the norm $ \|u\|_{H_0^{1}(\R^n)}^2 = \int_{\R^n} |\nabla u|^2dx.$

\end{remark}

\begin{proposition}\label{Prop M-P: estimate at the test functions-local}
Let  $\Upsilon_2:= \frac{2-s}{2(n-s)}  \mu_{\gamma,s,2}(\rn)^{\frac{n-s}{2-s}}.$ Then, there exists $\tau_{l,2} >0 $ for $l = 1,..,5$ such that\\

1) If $L_{\gamma,2}$ is not critical, then

\begin{equation}\label{Formula M-P: Phi(u eps): NoN-critical}
\sup\limits_{t \ge 0}\Phi(t v_{\eps,2}) = \Upsilon_2  - \tau_{1,2}  h(0) \eps^{n - q\frac{n - 2}{2}} + o(\eps^{n- q\frac{n - 2}{2}}); 
\end{equation}

2) If $L_{\gamma,2}$ is critical,  then
\begin{align}\label{Formula M-P: Phi(u eps): Critical}
\bs
\sup\limits_{t \ge 0}\Phi(t v_{\eps,2}) &= \Upsilon_2 + \left\{\begin{array}{rl}
\displaystyle   - \tau_{2,2}  h(0) \eps^{n - q\frac{n - 2}{2}} + o(\eps^{n- q\frac{n - 2}{2}}) \,\,\,\,\,\,\,\,\,\,\,\,\,\,\,\,\,\,\,\,\,\,\,\,\,\,\,\,\,\,\,\,\,\,\,\,\,\,\,\,\,\,\,\,\,\,\,\,\,\,\,\,\,\,\,\,\,\,\,\,\,\, & \text{if }  q > q_{crit}(2) \vspace{0.1cm}\\
- (\tau_{3,2} h(0) + \tau_{4,2} m^\lambda_{\gamma,2}(\Omega)) \eps^{\beta_+(2)-\beta_-(2)} + o(\eps^{\beta_+(2)-\beta_-(2)})  & \text{if } q= q_{crit}(2) \\ - \tau_{5,2} m^\lambda_{\gamma,2}(\Omega) \eps^{\beta_+(2)-\beta_-(2)} + o(\eps^{\beta_+(2)-\beta_-(2)})\,\,\,\,\,\,\,\,\,\,\,\,\,\,\,\,\,\,\,\,\,\,\,\,\,\,\,\,\, \,\,\,\,& \text{if } q <  q_{crit}(2),
\end{array}\right. 
\es
\end{align}
where $q_{crit} (2) := 2^*_2 - 2 \frac{\bpl-\bml}{n-2} \in  (2,2^*_2).$  
 
\end{proposition}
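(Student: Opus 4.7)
The plan is to mirror the non-local argument from Proposition \ref{Prop M-P: estimate at the test functions} word-for-word, swapping the fractional Laplacian results for their classical counterparts drawn from \cite{Ghoussoub-Robert-2015}. First I would fix a cut-off $\eta\in C_c^\infty(\Omega)$ with $\eta\equiv 1$ on $B_\delta(0)$, and take $U_2\in H^1(\rn)$ to be a positive extremal for $\mu_{\gamma,s,2}(\rn)$ (which exists and, by the asymptotic analysis in \cite{Ghoussoub-Robert-2015}, satisfies $\lim_{|x|\to\infty}|x|^{\bpl}U_2(x)=1$ and $U_2\in C^1(\rn\setminus\{0\})$). Define $u_{\eps,2}(x)=\eps^{-(n-2)/2}U_2(\eps^{-1}x)$. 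In the non-critical case set $v_{\eps,2}:=\eta u_{\eps,2}$; in the critical case use the singular function $H$ associated with $L_{\gamma,2}-\lambda I$ (Proposition 3 of \cite{Ghoussoub-Robert-2015}), write $H=\eta|x|^{-\bpl}+g$ with $g(x)=m^{\lambda}_{\gamma,2}(\Omega)|x|^{-\bml}+o(|x|^{-\bml})$ near $0$, and set $T_{\eps,2}:=\eta u_{\eps,2}+\eps^{(\bpl-\bml)/2}g$.

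Next I would decompose
\[
\Phi_2(tv_{\eps,2})=\frac{t^2}{2}I_{\eps,2}-\frac{t^{\critsl}}{\critsl}J_{\eps,2}-\frac{t^q}{q}K_{\eps,2},
\]
with the obvious meaning for $I_{\eps,2},J_{\eps,2},K_{\eps,2}$. The estimates for $I_{\eps,2}$ and $J_{\eps,2}$ are exactly those established in Sections 3--4 of \cite{Ghoussoub-Robert-2015}: they produce $\eps^2$, $\eps^2\ln\eps^{-1}$, or $\eps^{\bpl-\bml}m^{\lambda}_{\gamma,2}(\Omega)$ corrections according to whether $\gamma<\gamma_{crit}(2)$, $\gamma=\gamma_{crit}(2)$, or $\gamma>\gamma_{crit}(2)$. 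For $K_{\eps,2}$ I would repeat the two-case computation carried out in the non-local proof verbatim: split over $B_\delta$ and $\Omega\setminus B_\delta$, change variables $x=\eps X$, use $U_2(X)\sim |X|^{-\bpl}$ to show
\[
\int_\Omega h|v_{\eps,2}|^q\,dx=h(0)\,\eps^{n-q(n-2)/2}\!\int_{\rn}|U_2|^q\,dX+o\bigl(\eps^{n-q(n-2)/2}\bigr),
\]
in the non-critical case, and in the critical case use the elementary inequality $\bigl||X+Y|^q-|X|^q-qXY|X|^{q-2}\bigr|\le C(|X|^{q-2}Y^2+|Y|^q)$ with $X=\eta u_{\eps,2}$, $Y=\eps^{(\bpl-\bml)/2}g$, verifying that the cross and remainder terms are $o(\eps^{\bpl-\bml})+o(\eps^{n-q(n-2)/2})$.

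With these in hand I would invoke the standard maximisation-in-$t$ trick (the exact analog of Step II of \cite[Proposition 3]{Jaber}) to obtain
\[
\sup_{t\ge 0}\Phi_2(tv_{\eps,2})=d\,\Psi_2(v_{\eps,2})^{\critsl/(\critsl-2)}-\frac{T_0^q}{q}K_{\eps,2}+o(K_{\eps,2}),
\]
where $\Psi_2(v_{\eps,2})=I_{\eps,2}/J_{\eps,2}^{2/\critsl}$. Substituting the asymptotics of $I_{\eps,2},J_{\eps,2},K_{\eps,2}$ and simplifying then yields \eqref{Formula M-P: Phi(u eps): NoN-critical} in the non-critical case (using $2>n-q(n-2)/2$), and in the critical case leads to the three-sub-case outcome of \eqref{Formula M-P: Phi(u eps): Critical}, with the threshold $q_{crit}(2)=2^*_2-2(\bpl-\bml)/(n-2)\in(2,2^*_2)$ obtained by equating the exponents $n-q(n-2)/2$ and $\bpl-\bml$.

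The main obstacle is the bookkeeping at the balance point $q=q_{crit}(2)$, where the $h(0)$-term and the mass-term contribute at the same order $\eps^{\bpl-\bml}$: one must keep track of the explicit positive constants $\tau_{3,2},\tau_{4,2}$ produced respectively by $K_{\eps,2}$ and by the $m^{\lambda}_{\gamma,2}(\Omega)$ correction in $\Psi_2$, and check that no hidden cancellation occurs. A secondary subtlety, as in the non-local case, is verifying that the cross-term $q\eps^{(\bpl-\bml)/2}\int_{B_\delta}hg|u_{\eps,2}|^{q-1}\,dx$ is $o(\eps^{\bpl-\bml})$ for every $q\in(2,2^*_2)$; this follows from $g(x)=O(|x|^{-\bml})$ and the estimate $|U_2(X)|^{q-1}\lesssim|X|^{-\bpl(q-1)}$ at infinity, but the exponent arithmetic has to be done carefully. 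Everything else is a clean transcription of the $0<\alpha<2$ proof into the $\alpha=2$ setting.
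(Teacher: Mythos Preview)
Your proposal is correct and is precisely what the paper does: it gives no separate proof for Proposition~\ref{Prop M-P: estimate at the test functions-local}, but simply states that one follows the proof of Proposition~\ref{Prop M-P: estimate at the test functions} verbatim with $\alpha=2$, replacing the fractional ingredients (extremal asymptotics, singular function $H$, mass, and the $I_\eps,J_\eps$ expansions) by their local counterparts from \cite{Ghoussoub-Robert-2015}. Your outline of the test-function construction, the $I_{\eps,2},J_{\eps,2},K_{\eps,2}$ decomposition, the $K_{\eps,2}$ computation, and the maximisation-in-$t$ step from \cite{Jaber} matches this exactly, including the identification of the threshold $q_{crit}(2)$ and the two delicate points (the $q=q_{crit}(2)$ bookkeeping and the cross-term estimate).
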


\subsection{Proof of Theorem \ref{Thm M-P: Main result }: The local case  }\label{Section: proof of main Thm M-P } 

Assume that $\alpha = 2.$ Theorem \ref{Thm M-P: Main result } is  a direct consequence of Theorem \ref{Thm M-P: main condition of existence:Local} and Proposition \ref{Prop M-P: estimate at the test functions-local}.

\end{document}